\newtheorem{theorem}{Theorem}[section]
\newtheorem{corollary}[theorem]{Corollary}
\newtheorem{proposition}[theorem]{Proposition}
\theoremstyle{definition}
\newtheorem*{remark}{Remark}
\numberwithin{equation}{section}
\definecolor{arcColor}{HTML}{0A4A8A} 
\tikzstyle{mesh}=[pattern=north east lines, pattern color=gray!70, draw=gray]
\tikzstyle{arc1}=[draw, line width=1.2, color=arcColor]
\tikzstyle{arc2}=[line width=1, color=arcColor!25]
\def\abs#1{\lvert#1\rvert}
\def\Av{\mathcal{S}}
\DeclareMathOperator\id{id}
\DeclareMathOperator\height{height}
\def\R{\rule[-1ex]{0ex}{3.6ex}}
\definecolor{red1}{HTML}{B02400}
\newcommand\dyckpath[3]{
\def\diam{0.08}
\begin{scope}
	\draw[help lines] (#1) -- ++(#2*2,0);
	\draw[line width=1pt] (#1) foreach \dir in {#3}{ -- ++(\dir*90-45:1.41)};
	\draw[fill] (#1) circle (\diam);
	\draw[fill] (#1) foreach \dir in {#3}{ ++(\dir*90-45:1.41) circle (\diam)};
\end{scope}
}
\newcommand{\perm}[1]{%
	\def\Perm{#1}
	\StrSubstitute{\Perm}{,}{\,}
}
\newcommand{\drawarc}[3]{%
\def\sf{#1}
\def\n{#2}
\tikz[scale=\sf,baseline=0]{%
	\foreach \i in {1,...,\n} {\draw[fill,arc1] (\i,0) circle (0.06);}
	\foreach \x/\y in {#3} {%
		\pgfmathsetmacro\s{\x+\y}
		\pgfmathsetmacro\d{\y-\x}
		\pgfmathsetmacro\eps{\d^0.7}
		\draw[arc1] (\x,0) parabola bend (0.5*\s,0.25*\eps) (\y,0);
	}
	 \foreach \i in {1,...,\n}{\node[below=2pt] at (\i,0) {\small \i};}
}
}
\newcommand{\drawarcPerm}[4]{%
\def\sf{#1}
\def\n{#2}
\tikz[scale=\sf,baseline=0]{%
	\foreach \x/\y in {#3} {%
		\pgfmathsetmacro\s{\x+\y}
		\pgfmathsetmacro\d{\y-\x}
		\pgfmathsetmacro\eps{\d^0.7}
		\draw[arc1] (\x,0) parabola bend (0.5*\s,0.25*\eps) (\y,0);
	}
	\foreach \x/\y in {#4} {%
		\pgfmathsetmacro\s{\x+\y}
		\pgfmathsetmacro\d{\y-\x}
		\pgfmathsetmacro\eps{\d^0.7}
		\draw[arc2,<-] (\x,0) parabola bend (0.5*\s,-0.2*\eps) (\y,0);
	}
	 \foreach \i in {1,...,\n}{ \node[below=0pt,gray!40] at (\i,0) {\tiny \i};}
	\foreach \i in {1,...,\n} { \draw[fill,arc1] (\i,0) circle (0.06);}
	\foreach \x/\y in {#3}{ \node[above=4pt] at (\x,0) {\scriptsize \y};}
	\foreach \x/\y in {#4}{ \node[below=6pt] at (\y,0) {\scriptsize \x};}
}
}
\newcommand{\drawPoly}[2]{%
\def\sf{#1}
\hskip3pt
\tikz[scale=\sf,baseline=0]{%
	\foreach [count=\i] \b/\h in {#2}{
		\draw[thick] (\i,\b) grid (\i+1,\b+\h);
	}
}
}
\begin{document}

\title{Odd-indexed Fibonacci numbers via pattern-avoiding permutations}
\author[Gil]{Juan B. Gil}
\address{Penn State Altoona\\ 3000 Ivyside Park\\ Altoona, PA 16601}
\email{jgil@psu.edu}

\author[Xu]{Felix H. Xu}
\email{fxu8103@gmail.com}

\author[Zhu]{William Y. Zhu}
\email{wyzhu576@gmail.com}

\begin{abstract}
In this paper, we consider several combinatorial problems whose enumeration leads to the odd-indexed Fibonacci numbers, including certain types of Dyck paths, block fountains, directed column-convex polyominoes, and set partitions with no crossings and no nestings. Our goal is to provide bijective maps to pattern-avoiding permutations and derive generating functions that track certain positional statistics at the permutation level.
\end{abstract}

\maketitle

\section{Introduction}
\label{sec:intro}

The Fibonacci sequence, defined by $F_1 = 1$, $F_2 = 1$, and $F_n = F_{n-1} + F_{n-2}$ for $n>2$, is one of the most common sequences in mathematics. They appear directly or indirectly (e.g.\ by means of the golden ratio) in various areas of mathematics as well as other fields like computer science, physics, and biology. In this paper, we are interested in the bisecting subsequence $(F_{2n-1})_{n\in\mathbb{N}}$. If we let $a_n=F_{2n-1}$, we then have $a_1=1$, $a_2=2$, and
\begin{equation}\label{eq:fiboRecurrence}
 a_n = 3a_{n-1} - a_{n-2} \;\text{ for } n\ge 3. 
\end{equation}
This sequence, starting with $1, 2, 5, 13, 34, 89, 233, 610, 1597, 4181, 10946, 28657, \dots$, is listed as sequence A001519 in the OEIS \cite{oeis} and has many interesting combinatorial interpretations on its own. We will focus on the following four: 
\begin{itemize}
\renewcommand\labelitemi{$\triangleright$}
\item Dyck paths of height at most 3
\item Block fountains of coins 
\item Set partitions with no crossings and no nestings
\item Directed column-convex polyominoes
\end{itemize}
Our goal here is to bijectively connect each of these combinatorial families to a corresponding class of pattern-avoiding permutations. It is known that there are nine symmetric classes of permutations, classically avoiding one pattern of length 3 and one of length 4, that are all enumerated by the odd-indexed Fibonacci numbers, see Atkinson~\cite{Atk99} or West~\cite{West96}. 

\smallskip
Let us briefly review some terminology. A permutation on $[n]=\{1,\dots,n\}$ is a one-to-one function $\sigma:[n]\to[n]$ that can be written as $\sigma=\sigma_1\sigma_2\cdots\sigma_n$, where $\sigma_i = \sigma(i)$ is the image of $i$ under $\sigma$.\footnote{This way of writing permutations is referred to as {\em one-line notation}.} We let $\Av_n$ denote the set of permutations on $[n]$ and let $\id_n=12\cdots n$.

Let $\tau\in\Av_k$. A permutation $\sigma$ is said to contain the pattern $\tau$ if it has a subsequence $\sigma(i_1),\dots,\sigma(i_k)$ whose elements are in the same relative order as those in $\tau$. For a set $P$ of patterns, we let $\Av_n(P)$ denote the set of permutations on $[n]$ that avoid (do not contain) any pattern $\tau\in P$. For example, the permutation $\sigma = \perm{3,4,1,2,5,6}$ belongs to $\Av_n(132,321)$, but it has multiple occurrences of the other patterns of length $3$. For instance, $\sigma$ contains two occurrences of the pattern $\tau=231$, namely $(3,4,1)$ and $(3,4,2)$. 

A permutation $\sigma$ is said to contain the {\em vincular} pattern $\underline{21}43$ if it contains a $2143$ pattern at positions $i_1< i_2 < i_3 < i_4$ with the additional condition that $i_2-i_1=1$. That is, in one-line notation, the values $\sigma(i_1)$ and $\sigma(i_2)$ must be adjacent. For example, the permutation $\sigma=\perm{3,4,1,6,2,5}$ contains two $2143$ patterns, $(3,1,6,5)$ and $(4,1,6,5)$, but only the latter is a vincular $\underline{21}43$ pattern.

For more on pattern-avoiding permutations, we refer to the book by Kitaev~\cite{Kitaev11}.

\smallskip
The results of the paper are presented in four sections:
\begin{itemize}
\renewcommand\labelitemi{--}
\item Section~\ref{sec:321_4123} $\leadsto$ $\Av_n(321,4123)$ and their connection to Dyck paths of semilength $n$ and height at most 3. In this section we also review some terminology and recall a useful characterization of $321$-avoiding permutations.
\item Section~\ref{sec:321_2143} $\leadsto$ $\Av_n(321,\underline{21}43)$ and their connection to block $n$-fountains. 
\item Section~\ref{sec:321_3412} $\leadsto$ $\Av_n(321,3412)$, also known as Boolean permutations. Using suitable arc diagrams on $n$ nodes, we provide a simple bijection between Boolean permutations on $[n]$ and noncrossing, nonnesting set partitions of $[n]$.
\item Section~\ref{sec:231_3124} $\leadsto$ $\Av_n(231,3124)$ and their connection to directed column-convex polyominoes of area $n$. 
\end{itemize}

Each section is similarly structured. We let $g_n$ be the cardinality of the corresponding set of permutations of size $n$ and show that the sequence satisfies the recurrence relation \eqref{eq:fiboRecurrence}. We also consider subsets tracking either the position of the $1$ or the position of the $n$ in each permutation and derive corresponding enumerative triangles together with their bivariate generating functions. Finally, we discuss the combinatorial family associated with the permutations and provide an explicit bijection. All of our bijective maps are constructive and reveal some information about the structure of the permutations under consideration.

\section{Permutations avoiding 321 and 4123}
\label{sec:321_4123}

Before we dive into the content of this section, let us review some notation and discuss some properties of $321$-avoiding permutations. Given two permutations $\pi$ and $\sigma$ of sizes $m$ and $n$, respectively, their {\em direct sum} $\pi\oplus\sigma$ is the permutation of size $m+n$ consisting of $\pi$ followed by a shifted copy of $\sigma$. Similarly, their {\em skew sum} $\pi\ominus\sigma$ is the permutation consisting of $\sigma$ preceded by a shifted copy of $\pi$. For example, $312\oplus 21 = 31254$ and $312\ominus 21 = 53421$. 

For a permutation $\sigma$, the value $\sigma(j)$ is called a {\em left-to-right maximum} of $\sigma$ if $\sigma(j)>\sigma(i)$ for every $i<j$. It is straightforward to verify that a $321$-avoiding permutation is uniquely determined by the positions and values of its left-to-right maxima. Indeed, suppose 
\[ 1=i_1<i_2<\dots<i_k\le n \text{ with } k\le n, \text{ and } 1\le v_1<v_2<\dots<v_k=n. \]
Let $A=\{\ell_1,\dots,\ell_{n-k}\}$ be the complement of $\{i_1, \dots, i_k\}$ in $[n]$ and let $B=\{u_1,\dots,u_{n-k}\}$ be the complement of $\{v_1,\dots,v_k\}$, where $\ell_1< \dots <\ell_{n-k}$ and $u_1<\dots<u_{n-k}$. Let $\sigma$ be the permutation on $[n]$ defined by
\begin{align*}
  \sigma(i_j) &= v_j \text{ for } j\in \{1,\dots,k\}, \\
  \sigma(\ell_j) &=u_j \text{ for } \ell_j\in A, u_j\in B, \text{ and } j \in \{1,\dots,n-k\}.
\end{align*}
By construction, $\sigma$ consists of two increasing sequences and is therefore $321$-avoiding.

\medskip
We proceed with a simple proof of the following known statement.

\begin{proposition}
$\abs{\Av_n(321,4123)} = F_{2n-1}$.
\end{proposition}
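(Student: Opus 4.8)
The plan is to set $g_n=\abs{\Av_n(321,4123)}$ and show that $g_n=3g_{n-1}-g_{n-2}$ for $n\ge 3$, with $g_1=1$ and $g_2=2$, which characterizes $F_{2n-1}$ by \eqref{eq:fiboRecurrence}. The small cases are immediate by listing; the work is the recurrence. I would exploit the structure recalled just above: a $321$-avoiding permutation $\sigma$ is determined by the positions $i_1<\dots<i_k$ and values $v_1<\dots<v_k$ of its left-to-right maxima, and the non-maxima form a second increasing sequence slotted into the remaining positions. So the task is to understand what the extra restriction ``avoid $4123$'' says about this data. Since $4123$ has its largest entry first followed by an increasing run, a $4123$ pattern in a $321$-avoider essentially forces a left-to-right maximum (the ``4'') sitting to the left of three entries that are increasing among themselves and all smaller than it; because the non-maxima are already increasing, the obstruction boils down to: no left-to-right maximum $\sigma(i_j)$ may have three or more entries after it that are all smaller than $\sigma(i_j)$. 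I would translate this into a clean combinatorial condition on $(i_j,v_j)$ — roughly, that after the first position where a ``descent into the bottom sequence'' happens, very little freedom remains — and in fact I expect $\Av_n(321,4123)$ to be describable by a simple picture: an initial increasing run followed by a controlled tail.

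Concretely, I would condition on $\sigma(1)=\sigma_1$. If $\sigma_1=1$, then deleting the leading $1$ and decrementing gives a bijection with $\Av_{n-1}(321,4123)$, contributing $g_{n-1}$. The remaining cases $\sigma_1=m\ge 2$ need the $4123$-avoidance: having a value $m\ge 2$ in front means the entries $1,2,\dots,m-1$ all appear later and in increasing order (by $321$-avoidance they cannot be part of a descent among themselves); avoiding $4123$ with the ``4''$=m$ then forces $m-1\le 2$, i.e.\ $\sigma_1\in\{2,3\}$ unless... — here I would carefully check whether larger $\sigma_1$ is killed outright or survives only in degenerate configurations, and more generally set up a recursion on the ``shape'' of the tail. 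The cleanest route is probably a direct decomposition: show every $\sigma\in\Av_n(321,4123)$ is either $1\oplus\sigma'$ with $\sigma'\in\Av_{n-1}$, or begins with a short prescribed block (coming from $\sigma_1\in\{2,3,\dots\}$ together with the positions of the small values) after which the rest is again a smaller pattern-avoider, and count the blocks to get the $3g_{n-1}-g_{n-2}$ bookkeeping — the $3$ and the $-1$ should emerge as an inclusion–exclusion over these block types.

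The main obstacle will be pinning down exactly which prefixes are allowed and avoiding overcounting at the seams: the $321$ and $4123$ conditions interact, so a configuration legal for one prefix choice may be double-counted or illegal under another, and the $-g_{n-2}$ term signals a genuine correction that has to be located precisely. If the direct prefix decomposition proves fiddly, the fallback is to first establish (by the same left-to-right-maxima analysis) a bijection or transfer matrix onto Dyck paths of semilength $n$ and height $\le 3$ — the object Section~\ref{sec:321_4123} is ultimately about — for which the recurrence $a_n=3a_{n-1}-a_{n-2}$ is classical (the height-$\le 3$ Dyck path generating function is a truncated continued fraction $1/(1-x/(1-x/(1-x)))$, whose denominator gives exactly \eqref{eq:fiboRecurrence}). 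Either way the proposition follows once the recurrence and initial values are in hand.
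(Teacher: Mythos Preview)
Your overall plan --- prove $g_n=3g_{n-1}-g_{n-2}$ and invoke \eqref{eq:fiboRecurrence} --- is exactly the paper's. The decomposition, however, is different: you condition on the first value $\sigma(1)$, whereas the paper conditions on the \emph{position of $n$}. The paper's argument is very short: $n$ must sit in one of the last three positions of any $\sigma\in\Av_n(321,4123)$; inserting $n$ into the last or second-to-last slot of an element of $\Av_{n-1}(321,4123)$ is always legal ($2g_{n-1}$), while the third-to-last slot is legal precisely when the smaller permutation does not end in a descent, and those that do are exactly the ones with $n-1$ in the penultimate position, counted by $g_{n-2}$.

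Your first-value variant can be completed, but as written it is not yet a proof: you stop at ``$\sigma_1\in\{2,3\}$ unless\dots'' and leave the case analysis open. The hedge is unwarranted --- $\sigma_1\ge 4$ is genuinely impossible, since then $1,2,3$ all lie to the right of $\sigma_1$ and, being non-maxima of a $321$-avoider, appear there in increasing order, giving a $4123$. After that, $\sigma_1=1$ and $\sigma_1=2$ each contribute $g_{n-1}$ (prepending a $2$ cannot create either forbidden pattern), while $\sigma_1=3$ contributes $g_{n-1}-g_{n-2}$: prepending a $3$ to $\sigma'\in\Av_{n-1}(321,4123)$ creates a $321$ exactly when $2$ precedes $1$ in $\sigma'$, which in a $321$-avoider happens iff $\sigma'(1)=2$, and by the previous case there are $g_{n-2}$ such $\sigma'$. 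So the ``genuine correction'' you anticipated is precisely this, but your proposal does not locate it. The Dyck-path fallback would also work, though in the paper the height-$\le 3$ bijection is established \emph{after} this proposition rather than serving as its proof.
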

\begin{proof}
Let $g_n = \abs{\Av_n(321,4123)}$. Clearly, $g_1=1$ and $g_2=2$. For $n\ge 3$, every permutation $\sigma\in \Av_n(321,4123)$ must have its largest element in one of its last three positions. 

Every $\sigma$ having $n$ at either the last or second-to-last position, can be obtained from a unique element of $\Av_{n-1}(321,4123)$ by inserting $n$ into the corresponding position. Thus, there are $g_{n-1}+g_{n-1}$ such permutations. However, inserting $n$ into the third-to-last position of a permutation of size $n-1$ does not work if the permutation ends with a descent (it would create a forbidden $321$ pattern), so we need to remove those. Now, since a permutation in $\Av_{n-1}(321,4123)$ ending with a descent must have its largest element in the second-to-last position, we conclude that there are $g_{n-2}$ such permutations. Therefore, there is a total of $g_{n-1} - g_{n-2}$ elements of $\Av_{n}(321,4123)$ having $n$ in the third-to-last position.

In conclusion, $g_1=1$, $g_2=2$, and $g_n = 3g_{n-1} - g_{n-2}$ for $n\ge 3$. Hence $g_n=F_{2n-1}$.
\end{proof}

We now consider the set $\Av_n^{k\mapsto1}(321,4123)$ of permutations in $\Av_n(321,4123)$ such that $\sigma(k)=1$. In one-line notation this means that entry $1$ is at position $k$. Note that for every $n>1$, the set $\Av_n^{n\mapsto1}(321,4123)$ consists of the single permutation $\sigma=\id_{n-1}\ominus 1$.

\begin{proposition}
\label{prop:321_4123Positional}
For $1\le k<n$, we have $\abs{\Av_n^{k\mapsto1}(321,4123)} = k\cdot \abs{\Av_{n-k}(321,4123)}$. A few terms are listed in Table~\ref{tab:triangle321_4123}. The function $g(x,t) = \sum\limits_{n\ge 1}\sum\limits_{k=1}^n \abs{\Av_n^{k\mapsto1}(321,4123)}\, t^k x^n$ satisfies
\[ g(x,t) =  \frac{tx}{1-tx} + \frac{tx}{(1-tx)^2}\cdot \frac{x-x^2}{1-3x+x^2}. \]
\end{proposition}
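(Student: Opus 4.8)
The plan is to prove the two claims of Proposition~\ref{prop:321_4123Positional} in order: first the combinatorial identity $\abs{\Av_n^{k\mapsto1}(321,4123)} = k\cdot \abs{\Av_{n-k}(321,4123)}$ for $1\le k<n$, and then assemble the bivariate generating function from it. For the identity, I would set up a bijection between $\Av_n^{k\mapsto1}(321,4123)$ and the Cartesian product $[k]\times\Av_{n-k}(321,4123)$. The key structural observation is that if $\sigma(k)=1$, then $\sigma(1),\dots,\sigma(k)$ is a sequence whose minimum is at the last position $k$; since $\sigma$ avoids $321$, the entries $\sigma(1),\dots,\sigma(k-1)$ must already be increasing, and moreover (again by $321$-avoidance with the $1$ sitting at position $k$) every entry after position $k$ must exceed all but at most a controlled number of the earlier entries. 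I expect that the entries $\sigma(1),\dots,\sigma(k-1)$ are forced to be $k$ consecutive-ish values and that the ``$4123$''-avoidance together with ``$321$''-avoidance pins down exactly which $k$ values occupy the first $k$ positions: there are $k$ admissible choices (indexed by how many of the values $\{1,\dots,k\}$ versus larger values appear), and once that choice is made, the remaining positions $k+1,\dots,n$ carry a freely chosen permutation of the complementary values that must itself avoid $321$ and $4123$ — giving the factor $\abs{\Av_{n-k}(321,4123)}$.

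In carrying this out I would lean on the left-to-right-maxima characterization of $321$-avoiding permutations recalled just before the first proposition: a $321$-avoiding permutation is determined by the positions and values of its left-to-right maxima, and it decomposes as a shuffle of two increasing sequences. With $\sigma(k)=1$, positions $1,\dots,k-1$ are all left-to-right maxima, and the value $1$ at position $k$ starts the ``second'' increasing run. The $4123$-avoidance is what controls the first block: a $4123$ pattern would need a large value early followed by three increasing smaller values, so avoiding it caps how large the early left-to-right maxima can be relative to what follows. The clean way to package this is to show that $\sigma$ restricted to positions $k+1,\dots,n$, after order-isomorphism, ranges over all of $\Av_{n-k}(321,4123)$, independently of a free parameter $j\in\{1,\dots,k\}$ recording the ``shape'' of the first block; conversely any such pair reconstructs a valid $\sigma$. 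The main obstacle is getting this first-block classification exactly right — verifying that there are precisely $k$ choices (not $k-1$ or $k+1$) and that each genuinely avoids both patterns in combination with an arbitrary avoider on the tail — so I would do that bookkeeping carefully, probably by induction on $k$ or by a direct case split on $\sigma(n)$ mirroring the proof of the previous proposition.

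For the generating function, once the identity is in hand the computation is routine. Writing $h(x)=\sum_{m\ge1}\abs{\Av_m(321,4123)}\,x^m=\sum_{m\ge1}F_{2m-1}x^m$, the recurrence $a_m=3a_{m-1}-a_{m-2}$ with $a_1=1,a_2=2$ gives the closed form $h(x)=\dfrac{x-x^2}{1-3x+x^2}$. Now split $g(x,t)$ according to whether $k=n$ or $k<n$. The $k=n$ terms contribute $\sum_{n\ge1}\abs{\Av_n^{n\mapsto1}(321,4123)}\,t^nx^n$; since that set is a singleton for every $n\ge1$ (and is $\{1\}$ for $n=1$), this piece is $\sum_{n\ge1}(tx)^n=\dfrac{tx}{1-tx}$. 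The $k<n$ terms contribute $\sum_{k\ge1}\sum_{n>k}k\,\abs{\Av_{n-k}(321,4123)}\,t^kx^n$; substituting $n=k+m$ with $m\ge1$ factorizes this as $\Big(\sum_{k\ge1}k\,(tx)^k\Big)\Big(\sum_{m\ge1}\abs{\Av_m(321,4123)}\,x^m\Big)=\dfrac{tx}{(1-tx)^2}\cdot h(x)$. Adding the two pieces yields exactly
\[
 g(x,t) = \frac{tx}{1-tx} + \frac{tx}{(1-tx)^2}\cdot\frac{x-x^2}{1-3x+x^2},
\]
as claimed. The only subtlety to flag here is the boundary case $n=1$ (where $k=n=1$ and $\Av_0$ would appear in the other sum): it is handled entirely by the $k=n$ sum, so no empty-permutation convention is needed, but I would state that explicitly to keep the index ranges honest.
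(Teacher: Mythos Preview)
Your overall architecture matches the paper's: isolate the prefix $\sigma(1),\dots,\sigma(k)$, show it admits exactly $k$ shapes, show the suffix at positions $k+1,\dots,n$ is a free $(321,4123)$-avoider of size $n-k$, and then assemble $g(x,t)$ by separating the $k=n$ diagonal from the $k<n$ product. Your generating-function computation is line-for-line the paper's, including the handling of the boundary $n=1$.

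The one place where your proposal is genuinely incomplete is exactly the step you flag as the ``main obstacle'': why there are precisely $k$ admissible prefixes. Your suggested attacks --- induction on $k$, or a case split on $\sigma(n)$ mirroring the preceding proposition --- are not wrong in spirit but are more circuitous than needed, and you have not actually carried either out. The paper disposes of this in one stroke by a pigeonhole argument you should use instead: suppose some entry $a>k+1$ sits among positions $1,\dots,k-1$. Then at most $k-2$ of the $k$ values in $\{2,\dots,k+1\}$ fit to the left of the $1$, so at least two of them, say $b<c$, lie to the right of $1$. Whichever order $b$ and $c$ appear in now forces a forbidden pattern: $(a,1,b,c)$ is a $4123$ if $b$ precedes $c$, and $(a,c,b)$ is a $321$ otherwise. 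Hence every prefix entry is at most $k+1$, so the first $k-1$ entries form an increasing $(k-1)$-subset of $\{2,\dots,k+1\}$, giving exactly $\binom{k}{k-1}=k$ choices --- no induction or tail case-split required. The converse (that any such prefix followed by $1$ and by any $(321,4123)$-avoiding arrangement of the remaining values is globally valid) is then routine, since a forbidden pattern headed by a prefix entry $a\le k+1$ would need several values below $a$ to its right, while only $1$ and the single omitted element of $\{2,\dots,k+1\}$ are available there.
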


\begin{proof}
Every permutation $\sigma\in\Av_n(321,4123)$ with $\sigma(1)=1$ must be of the form $\sigma=1\oplus\sigma'$ with $\sigma'\in\Av_{n-1}(321,4123)$. Thus the statement is clear for $k=1$.

Suppose now that $1$ is at position $k$ with $1<k<n$. If there is an entry $a>k+1$ to the left of $1$, then there can be at most $k-2$ elements from the set $\{2,\dots,k+1\}$ to the left of $1$. So, at least two elements from that set, say $2\le b<c\le k+1$, would have to be to the right of $1$. But then $(a,1,c,b)$ would make a 321 pattern and $(a,1,b,c)$ would make a $4123$ pattern. Since all $k-1$ entries to the left of $1$ must form an increasing sequence, and we now have that the sequence must be made of elements from the set $\{2,\dots, k +1\}$, there are $\binom{k}{k-1}=k$ ways to choose such a sequence. The remaining $n-k$ entries, one from $\{2,\dots,k+1\}$ and $n-k-1$ from the set $\{k+2,\dots,n\}$, can be placed to the right of $1$ in any order as long as their reduced permutation avoids the patterns $321$ and $4123$. There are $\abs{\Av_{n-k}(321,4123)}$ such arrangements, and so $\abs{\Av_n^{k\mapsto1}(321,4123)} = k\cdot \abs{\Av_{n-k}(321,4123)}$.

To derive $g(x,t)$, we isolate the $k=n$ case and use the above formula to get
\[ g(x,t) = \sum_{n=1}^\infty t^n x^n + \sum_{n=2}^\infty \sum_{k=1}^{n-1} k g_{n-k} t^k x^n, \]
where $g_{n-k} = \abs{\Av_{n-k}(321,4123)} = F_{2(n-k)-1}$. Then,
\begin{align*}
 g(x,t) &= \frac{tx}{1-tx} + \sum_{k=1}^\infty \sum_{n=k+1}^{\infty} k g_{n-k} (tx)^k x^{n-k} \\
 &= \frac{tx}{1-tx} + tx\sum_{k=1}^\infty k(tx)^{k-1} \sum_{n=k+1}^{\infty} g_{n-k} x^{n-k} \\
 &= \frac{tx}{1-tx} + \frac{tx}{(1-tx)^2} \sum_{m=1}^{\infty} g_{m} x^{m} \\
 &= \frac{tx}{1-tx} + \frac{tx}{(1-tx)^2}\cdot \frac{x-x^2}{1-3x+x^2},
\end{align*}
using the fact that $\sum\limits_{m=1}^{\infty} g_{m} x^{m} = \dfrac{x-x^2}{1-3x+x^2}$.
\end{proof}

\begin{table}[ht]
\begin{tabular}{c|cccccccc}
$n\setminus k$ & 1 & 2 & 3 & 4 & 5 & 6 & 7 & 8 \\[2pt] \hline 
\R 1 & 1 &&&&&&& \\
2 & 1& 1 &&&&&& \\
3 & 2 & 2 & 1 &&&&& \\
4 & 5 & 4 & 3 & 1 &&&& \\
5 & 13 & 10 & 6 & 4 & 1 &&& \\
6 & 34 & 26 & 15 & 8 & 5 & 1 && \\
7 & 89 & 68 & 39 & 20 & 10 & 6 & 1 & \\
8 & 233 & 178 & 102 & 52 & 25 & 12 & 7 & 1
\end{tabular}
\bigskip 
\caption{Triangle for $\abs{\Av_n^{k\mapsto1}(321,4123)}$ and $\abs{\Av_n^{k\mapsto1}(231,3124)}$.  See \cite[A105292]{oeis}.}
\label{tab:triangle321_4123}
\end{table}

\subsection*{Dyck paths of height at most 3}
A Dyck path of semilength $n$ is a lattice path from $(0,0)$ to $(2n,0)$ using up-steps $(1,1)$, down-steps $(1,-1)$, and never going below the $x$-axis. An up-step immediately followed by a down-step is called a {\em peak}. The {\em height} of a Dyck path $D$, denoted $\height(D)$, is the height of its highest peak. 

There are several bijections between $\Av_n(321)$ and the set of Dyck paths of semilength $n$. The one that best fits our needs is a slightly modified version of a map given by Krattenthaler~\cite{Kratt01}, let's call it $\varphi_K$. Suppose $\sigma\in\Av_n(321)$ has left-to-right maxima $\sigma(i_1),\dots,\sigma(i_k)$, where $1=i_1<\cdots<i_k$. Reading the permutation $\sigma$ from left to right, we construct the Dyck path $D_{\sigma}=\varphi_K(\sigma)$ as follows. Start with $\sigma(i_1)$ up-steps, followed by $i_2-i_1$ down-steps. For every subsequent $j\in\{2,\dots,k\}$, we draw $\sigma(i_j)-\sigma(i_{j-1})$ up-steps, followed by $i_{j+1}-i_j$ down-steps (with the convention $i_{k+1}=n+1$).
 
Observe that the height of the $j$th peak of $D_\sigma$ is
\[ \sigma(i_1)-(i_2-i_1)+(\sigma(i_2)-\sigma(i_1))-(i_3-i_2) + \dots +(\sigma(i_j)-\sigma(i_{j-1}))=\sigma(i_j)-i_j + 1. \]
In particular, we have
\begin{equation} \label{eq:height}
\height(D_\sigma)>3 \;\text{ if and only if }\; \sigma(i_j) > i_j +2 \;\text{ for some } j\in\{1,\dots,k\}.
\end{equation}

For example, the permutation $\sigma=\perm{2,4,5,1,3}$ corresponds to the Dyck path
\medskip
\begin{center}
\tikz[scale=0.55]{\dyckpath{0,0}{5}{1,1,0,1,1,0,1,0,0,0}}
\end{center}
having one peak of height $2$ and two peaks of height $3$.

\begin{proposition}
There is a bijection between $\Av_n(321,4123)$ and the set of Dyck paths of semilength $n$ and height at most 3.
\end{proposition}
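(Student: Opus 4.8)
The plan is to use the Krattenthaler-style bijection $\varphi_K$ from the text, restricted to $\Av_n(321,4123)$, and to show that its image is exactly the set of Dyck paths of semilength $n$ and height at most $3$. Since $\varphi_K$ is already a bijection from $\Av_n(321)$ onto all Dyck paths of semilength $n$, and $\Av_n(321,4123)\subseteq\Av_n(321)$, it suffices to prove that for a $321$-avoiding permutation $\sigma$, the extra condition ``$\sigma$ avoids $4123$'' is equivalent to ``$\height(D_\sigma)\le 3$''. By the height characterization \eqref{eq:height}, $\height(D_\sigma)\le 3$ holds precisely when $\sigma(i_j)\le i_j+2$ for every left-to-right maximum $\sigma(i_j)$, so the whole proof reduces to the following claim: a $321$-avoiding permutation contains a $4123$ pattern if and only if it has a left-to-right maximum $\sigma(i_j)$ with $\sigma(i_j)\ge i_j+3$.

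For the forward direction, suppose $\sigma\in\Av_n(321)$ contains a $4123$ pattern, say at positions $p_1<p_2<p_3<p_4$ with $\sigma(p_4)<\sigma(p_1)$ (the ``4'') and $\sigma(p_2)<\sigma(p_3)<\sigma(p_4)$. I would replace $\sigma(p_1)$ by the largest left-to-right maximum $\sigma(i_j)$ occurring at or before position $p_2$; this value is at least $\sigma(p_1)$, hence still larger than $\sigma(p_2),\sigma(p_3),\sigma(p_4)$, and $i_j\le p_2$. Now the three entries $\sigma(p_2),\sigma(p_3),\sigma(p_4)$ all lie strictly to the right of position $i_j$ and are all strictly below $\sigma(i_j)$; since they appear in positions $>i_j$ I can bound $\sigma(i_j)-i_j$: the values $1,2,\dots,\sigma(i_j)-1$ that are smaller than $\sigma(i_j)$ include the three values $\sigma(p_2),\sigma(p_3),\sigma(p_4)$, and I want to deduce at least three of them sit in positions $>i_j$. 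Indeed among positions $1,\dots,i_j$ the entry $\sigma(i_j)$ is the maximum, so at most $i_j-1$ of the values below $\sigma(i_j)$ occur in those positions; hence at least $(\sigma(i_j)-1)-(i_j-1)=\sigma(i_j)-i_j$ of them occur in positions $>i_j$. Since we exhibited three such values, $\sigma(i_j)-i_j\ge 3$, i.e.\ $\sigma(i_j)\ge i_j+3$, as desired.

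For the converse, suppose some left-to-right maximum satisfies $\sigma(i_j)\ge i_j+3$; equivalently $\sigma(i_j)-i_j\ge 3$. As computed above, this means at least three values smaller than $\sigma(i_j)$ sit in positions to the right of $i_j$; call them, in increasing order of value, $b<c<d$, occurring at positions $q_b,q_c,q_d>i_j$. Because $\sigma$ is $321$-avoiding, no descent among entries all lying below the ``shield'' $\sigma(i_j)$ can be extended downward, and in fact the entries after position $i_j$ that are smaller than $\sigma(i_j)$ must themselves form an increasing subsequence (any inversion among them together with $\sigma(i_j)$ would give a $321$); therefore $q_b<q_c<q_d$. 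Then $(\sigma(i_j),b,c,d)=(\sigma(i_j),\sigma(q_b),\sigma(q_c),\sigma(q_d))$ at positions $i_j<q_b<q_c<q_d$ is a $4123$ pattern. This establishes the equivalence and hence that $\varphi_K$ restricts to the claimed bijection.

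The main obstacle is getting the $321$-avoidance to do its work cleanly in both directions: the key structural fact I am leaning on is that in a $321$-avoiding permutation, once we fix a left-to-right maximum $\sigma(i_j)$, all later entries that are smaller than $\sigma(i_j)$ appear in increasing order. I would state and prove this as a small lemma (it is immediate: two such entries in decreasing order would, together with $\sigma(i_j)$, form a $321$), and also double-check the counting identity ``$\#\{$values $<\sigma(i_j)$ in positions $>i_j\}=\sigma(i_j)-i_j$'' which follows because the first $i_j$ positions contain exactly the $i_j-1$ values below $\sigma(i_j)$ that are not among later positions. With that lemma in hand the rest is bookkeeping, and the height formula \eqref{eq:height} already quoted in the excerpt closes the loop.
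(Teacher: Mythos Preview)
Your proposal is correct and follows essentially the same route as the paper: restrict $\varphi_K$ and use the height criterion \eqref{eq:height} to reduce the problem to showing that a $321$-avoiding $\sigma$ contains $4123$ iff some left-to-right maximum satisfies $\sigma(i_j)\ge i_j+3$, with both directions handled by the count $\#\{\text{values}<\sigma(i_j)\text{ in positions}>i_j\}=\sigma(i_j)-i_j$ together with the observation that these values appear in increasing order. Your write-up is in fact a bit more careful than the paper's in two places: you explicitly justify replacing the ``4'' of the pattern by a left-to-right maximum, and you isolate the small lemma that the sub-maximum entries to the right of a left-to-right maximum are increasing; just make sure that in your forward direction you use the \emph{exact} count (not ``at most $i_j-1$ in the first $i_j$ positions'') so that exhibiting three small values to the right genuinely forces $\sigma(i_j)-i_j\ge 3$.
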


\begin{proof}
This is a direct consequence of the above bijection $\varphi_K$. In fact, if $D_\sigma=\varphi_K(\sigma)$ has $\height(D_\sigma)>3$, then by \eqref{eq:height} we have $\sigma(i_j) > i_j +2$ for some left-to-right maximum value $\sigma(i_j)$. Now, since $\sigma$ has $i_j-1$ entries to the left of $\sigma(i_j)$ and since there are $i_j +2$ values smaller than $\sigma(i_j)$, at least three of these values must appear in increasing order to the right of $\sigma(i_j)$, creating a $4123$ pattern. In conclusion, if $\sigma$ is a $(321,4123)$-avoiding permutation, then $\height(D_\sigma)\le 3$.

Conversely, let $D$ be a Dyck path and let $\sigma=\varphi_K^{-1}(D)$. If $\sigma$ has a $4123$ pattern, then there is a left-to-right maximum $\sigma(i_j)$ (i.e.\ $\sigma(i_j)>\sigma(i)$ for every $i<i_j$) and there are values $a<b<c<\sigma(i_j)$ such that $i_j<\sigma^{-1}(a)<\sigma^{-1}(b)<\sigma^{-1}(c)$. In other words, we must have $\sigma(i_j)>(i_j-1)+3=i_j+2$, which by \eqref{eq:height} implies $\height(D)>3$. Therefore, if $\height(D)\le 3$, then $\sigma=\varphi_K^{-1}(D)$ avoids the patterns $321$ and $4123$.
\end{proof}

\begin{remark}
At the level of Dyck paths, the first column in Table~\ref{tab:triangle321_4123} gives the number of Dyck paths of height at most $3$ that start with an up-step followed by a down-step. For $k>1$, the table gives the counting of Dyck paths of height at most $3$ by the position of the first long descent (more than one consecutive down-steps).
\end{remark}

\section{Permutations avoiding 321 and \texorpdfstring{\underline{21}43}{2143}}
\label{sec:321_2143}

In this section, we show that the elements of the set $\Av_n(321,\underline{21}43)$ are enumerated by the odd-indexed Fibonacci numbers. We also establish a bijection to block fountains.

\begin{proposition}
\label{prop:321_2143fibo}
$\abs{\Av_n(321,\underline{21}43)} = F_{2n-1}$.
\end{proposition}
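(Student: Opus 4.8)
The plan is to mimic the structure of the proof of the analogous statement for $\Av_n(321,4123)$: set $g_n=\abs{\Av_n(321,\underline{21}43)}$, observe directly that $g_1=1$ and $g_2=2$, and then establish the recurrence $g_n=3g_{n-1}-g_{n-2}$ for $n\ge 3$ by a careful analysis of where the value $n$ can sit and how it can be inserted. Since $\underline{21}43$ is a vincular pattern, the relevant constraint is that a "$21$" occurring in adjacent positions cannot be completed to the left—no, to the right—by an increasing pair of larger values; equivalently, $\sigma$ avoids $\underline{21}43$ iff whenever $\sigma(i)>\sigma(i+1)$, the suffix $\sigma(i+2)\cdots\sigma(n)$ contains no two entries larger than $\sigma(i)$ in increasing order. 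Because we are also avoiding $321$, every $\sigma$ decomposes as two interleaved increasing sequences (its left-to-right maxima and the rest), which should make the adjacency condition tractable.

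First I would analyze the position of $n$. In a $321$-avoiding permutation, $n$ is a left-to-right maximum, and the entries after it form an increasing run; moreover, if $n$ is not in the last position then $\sigma(i+1)<\cdots<\sigma(n)$ where $i=\sigma^{-1}(n)$. So unlike the $4123$ case, $n$ need not be near the end. The key observation I expect to use: inserting $n$ at the very end of any $\sigma'\in\Av_{n-1}(321,\underline{21}43)$ always yields a valid permutation (it creates no new descent and no value can play the "$4$" or "$3$" role after it), giving $g_{n-1}$ permutations with $\sigma(n)=n$. For permutations with $\sigma(j)=n$, $j<n$: the entries $\sigma(j+1),\dots,\sigma(n)$ are increasing and all less than $n$, and the pair of adjacent positions $(j, j+1)$ forms a descent $n>\sigma(j+1)$; then the vincular condition forces that at most one entry exceeding $\sigma(j+1)$ appears in $\sigma(j+2)\cdots\sigma(n)$—but those entries are increasing and all exceed $\sigma(j+1)$, so in fact $\sigma(n)$ is the only entry after position $j+1$, i.e.\ $j\ge n-2$. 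Hence $n$ lies in one of the last three positions, and I am back in essentially the same combinatorial situation as the $4123$ proof: $g_{n-1}$ permutations with $n$ last, $g_{n-1}$ with $n$ second-to-last (insert $n$ before the last entry of any $\sigma'$, which is always legal since the only new descent is $n>\sigma'(n-1)$ and nothing follows except $\sigma'(n-1)$), and $g_{n-1}-g_{n-2}$ with $n$ in the third-to-last position (insert $n$ into position $n-2$ of $\sigma'$; this fails exactly when $\sigma'$ ends in a descent, because then the new adjacent descent at $(n-2,n-1)$ is followed by the two increasing entries $\sigma'(n-2)$ — wait, those are not increasing — I must instead rule out the $321$ failure as in the $4123$ case, and the count of $\sigma'\in\Av_{n-1}$ ending in a descent is $g_{n-2}$ since such a $\sigma'$ has its maximum in position $n-2$).

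Summing gives $g_n = g_{n-1}+g_{n-1}+(g_{n-1}-g_{n-2}) = 3g_{n-1}-g_{n-2}$, which together with $g_1=1$, $g_2=2$ yields $g_n=F_{2n-1}$ by \eqref{eq:fiboRecurrence}.

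The step I expect to be the main obstacle is the careful verification that $n$ must occupy one of the last three positions and that the insertion maps into positions $n$, $n-1$, $n-2$ are bijective onto the appropriate subsets of $\Av_{n-1}(321,\underline{21}43)$ (especially confirming that inserting $n$ at position $n-1$ never creates a vincular $\underline{21}43$, and that the obstruction for position $n-2$ is precisely "$\sigma'$ ends in a descent" with no additional vincular obstruction sneaking in). This requires being scrupulous about which of the two forbidden patterns ($321$ versus the vincular $\underline{21}43$) is actually violated in each failure case, and confirming that the set of "bad" $\sigma'$ for the position-$(n-2)$ insertion has size exactly $g_{n-2}$—which, as in the earlier proposition, follows from the fact that a $321$-avoiding permutation ending in a descent has its maximum in the second-to-last position and is thus determined by an element of $\Av_{n-2}(321,\underline{21}43)$ together with a legal final entry.
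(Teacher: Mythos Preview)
Your argument rests on a misreading of the vincular pattern. You characterize $\underline{21}43$-avoidance as ``whenever $\sigma(i)>\sigma(i+1)$, the suffix contains no two entries larger than $\sigma(i)$ in increasing order''; but in the pattern $2143$ the ``$43$'' is a \emph{descending} pair both exceeding the ``$2$''. The correct condition is: whenever $\sigma(i)>\sigma(i+1)$, there is no later pair $j<k$ with $\sigma(j)>\sigma(k)>\sigma(i)$. With $\sigma(j)=n$ playing the role of the ``$2$'', nothing later can exceed $n$, so the descent at $(j,j+1)$ never extends to a $\underline{21}43$; your deduction that $n$ must lie in the last three positions therefore fails. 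Concretely, $\sigma=n\,1\,2\cdots(n-1)\in\Av_n(321,\underline{21}43)$ has $n$ in position~$1$ (and Table~\ref{tab:triangle321_2143} shows $a_{n,k}>0$ for all $k$). Your ``insert $n$ at position $n-1$'' map is also not a bijection onto $\Av_{n-1}(321,\underline{21}43)$: from $\sigma'=3124\in\Av_4(321,\underline{21}43)$ you obtain $31254$, which contains the vincular occurrence $(3,1,5,4)$.

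The paper's proof takes a genuinely different route: it partitions $\Av_n(321,\underline{21}43)$ according to whether $\sigma(1)=1$ and whether $\sigma(n)=n$, and handles the set $B_n$ of permutations with $\sigma(1)\neq 1$ and $\sigma(n)\neq n$ via a bijection $\varphi:B_n\to\Av_{n-1}(321,\underline{21}43)$ that subtracts $1$ from every left-to-right maximum value (keeping the positions of the maxima fixed). This yields $\abs{B_n}=g_{n-1}$ directly and the recurrence follows; the positional distribution of $n$ is not tri-valued and is treated separately in the next proposition.
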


\begin{proof}
Let $g_n = \abs{\Av_n(321,\underline{21}43)}$. Clearly, $g_1=1$ and $g_2=2$. For $n\ge 3$ we decompose $\Av_n(321,\underline{21}43)$ as a disjoint union of three sets, say $A_n^{1*}\cup A_n^{*n}\cup B_n$, where 
\begin{align*}
A_n^{1*} &= \{\sigma\in \Av_n(321,\underline{21}43): \sigma(1)=1\}, \\
A_n^{*n} &= \{\sigma\in \Av_n(321,\underline{21}43): \sigma(1)\not=1 \text{ and } \sigma(n)=n\}, \\
B_n &= \{\sigma\in \Av_n(321,\underline{21}43): \sigma(1)\not=1 \text{ and } \sigma(n)\not=n\}.
\end{align*}
Every element of $\Av_n(321,\underline{21}43)$ that starts with $1$ or ends with $n$ is of the form $1\oplus\sigma'$ or $\sigma'\oplus 1$, respectively, with $\sigma'\in \Av_{n-1}(321,\underline{21}43)$. In each case, there are $g_{n-1}$ such permutations. Moreover, every permutation in $\Av_n(321,\underline{21}43)$ that starts with $1$ and ends with $n$ is of the form $1\oplus\sigma''\oplus 1$ with $\sigma''\in \Av_{n-2}(321,\underline{21}43)$, so there are $g_{n-2}$ of those. Therefore, $\abs{A_n^{1*}} = g_{n-1}$ and $\abs{A_n^{*n}}=g_{n-1}-g_{n-2}$. Finally, there is a bijective map 
\begin{equation*}
\varphi: B_n \to \Av_{n-1}(321,\underline{21}43)
\end{equation*}
defined as follows. Let $\sigma\in B_n$ and let $\sigma(i_1),\dots,\sigma(i_m)$ be its left-to-right maxima listed in increasing order. Note that $i_1=1$, $\sigma(i_1)>1$,  $\sigma(i_m)=n$, and $i_m<n$. We let $\tau = \varphi(\sigma)$ be the unique permutation in $\Av_{n-1}(321)$ with left-to-right maxima $\tau(i_1),\dots,\tau(i_m)$, where $\tau(i_j) = \sigma(i_j)-1$ for $j\in\{1,\dots,m\}$. For example, 
\[ 2\,5\,1\,6\,3\,4 = \mathbf{2\,5}\,1\,\mathbf{6}\,3\,4 \mapsto \mathbf{1\,4}\,2\,\mathbf{5}\,3. \]
Suppose $\tau=\varphi(\sigma)$ contains a leftmost $\underline{21}43$ pattern, say $(b,a,d,c)$ with $a<b<c<d$. Then $b$ and $d$ are left-to-right maxima of $\tau$, entries $b$ and $a$ are adjacent, and $d-1$ must be to the right of $d$. Therefore, $\sigma$ must have $b+1$ and $d+1$ as left-to-right maxima at the same positions of $b$ and $d$ in $\tau$, and $d$ cannot be a left-to-right maximum of $\sigma$ because $d-1$ is not one for $\tau$. This means that $\sigma^{-1}(d) > \sigma^{-1}(d+1)$. Thus, if $a'$ is the element adjacent to the right of $b+1$ in $\sigma$, then $(b+1,a',d+1,d)$ is a $\underline{21}43$ pattern. In other words, if $\sigma$ avoids $\underline{21}43$, so does $\varphi(\sigma)$. 
This map is clearly bijective, which implies $\abs{B_n}=g_{n-1}$.

In conclusion, $g_1=1$, $g_2=2$, and $g_n = 3g_{n-1} - g_{n-2}$ for $n\ge 3$. Hence $g_n=F_{2n-1}$.
\end{proof}

\medskip
For the application discussed at the end of this section, it is meaningful to count the elements of $\Av_n(321,\underline{21}43)$ by the position of their largest entry. Let $\Av_n^{k\mapsto n}(321,\underline{21}43)$ be the set of permutations in $\Av_n(321,\underline{21}43)$ such that $\sigma(k)=n$. 

\begin{proposition}
Let $a_{n,k} = \abs{\Av_n^{k\mapsto n}(321,\underline{21}43)}$. We have 
\begin{align*}
 a_{n,1} &=1 \;\text{ for } n\ge 1, \\
 a_{n,n} &= F_{2n-3} \;\text{ for } n>1, \\[3pt] 
 a_{n,k} = a_{n-1,k-1} &+ a_{n-1,k} \;\text{ for } 1<k<n. 
\end{align*}
A few terms are listed in Table~\ref{tab:triangle321_2143}. The function $g(x,t) = \sum\limits_{n\ge 1}\sum\limits_{k=1}^n a_{n,k}\, t^k x^n$ satisfies
\[ g(x,t) =  \frac{tx(1-tx)(1-2tx)}{(1-x-tx)(1-3tx+t^2x^2)}. \]
\end{proposition}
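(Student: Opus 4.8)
The plan is to establish the three displayed identities for $a_{n,k}$ and then assemble the generating function $g(x,t)$ from them. The first identity, $a_{n,1}=1$, is immediate: if $\sigma(1)=n$ and $\sigma$ avoids $321$, then every pair of later entries must be increasing, forcing $\sigma=n\ominus\id_{n-1}$, which indeed avoids $\underline{21}43$. The second identity, $a_{n,n}=F_{2n-3}$, should follow by observing that a permutation with $\sigma(n)=n$ is exactly $\sigma'\oplus 1$ with $\sigma'\in\Av_{n-1}(321,\underline{21}43)$, hence $a_{n,n}=g_{n-1}=F_{2n-3}$ by Proposition~\ref{prop:321_2143fibo}; one should double-check that the case $\sigma'=\id_{n-1}$ (i.e.\ $\sigma=\id_n$, counted with $k=n$) is consistently handled and not double-counted against the $k=1$ column.

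The heart of the argument is the recurrence $a_{n,k}=a_{n-1,k-1}+a_{n-1,k}$ for $1<k<n$. First I would split $\Av_n^{k\mapsto n}(321,\underline{21}43)$ according to whether position $k+1$ (the slot just after $n$) holds a left-to-right maximum of the truncated permutation or not — equivalently, whether deleting $n$ creates a descent at position $k$ or leaves the entry there as a left-to-right maximum. More concretely: given $\sigma$ with $\sigma(k)=n$ and $k<n$, delete the entry $n$ to obtain a permutation $\sigma^-$ of $[n-1]$ (standardizing is unnecessary since $n$ was the largest value). Since $\sigma$ avoids $321$, so does $\sigma^-$, and one checks that $\sigma^-$ also avoids $\underline{21}43$ (removing the largest value cannot create such a pattern, as $n$ can only have played the role of a "$4$" and there is nothing larger). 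Conversely, reinserting $n$: if $\tau\in\Av_{n-1}(321,\underline{21}43)$, then inserting $n$ at position $k$ keeps $321$-avoidance precisely when positions $k,\dots,n-1$ of $\tau$ form an increasing run (so that $n$ followed by a descent is not created) — but since $\tau$ avoids $321$ its suffix after the last left-to-right maximum is increasing, and the condition becomes that $\tau(k-1)<\tau(k)$ or $k-1$ is itself a left-to-right-maximum position. I would phrase this as: inserting $n$ at position $k$ into $\tau$ yields a valid $\sigma$ iff $\tau(k)>\tau(k-1)$ (with the convention that this holds vacuously when the insertion is at position $k$ such that $\tau(k)$ is a left-to-right max), and separately check that no new $\underline{21}43$ is created by the inserted $n$. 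Counting: the permutations $\tau\in\Av_{n-1}(321,\underline{21}43)$ for which position $k$ is "insertable" are in bijection with either $a_{n-1,k}$ (those whose own maximum sits at position $\ge k$, contributing the case where $n$ lands weakly left of where the old max was) or $a_{n-1,k-1}$ — this two-way split is exactly where the recurrence comes from. The cleanest way is probably to track the position $j$ of $n-1$ in $\tau$: inserting $n$ at position $k$ is legal iff $k\le j$ or ($k=j+1$ and the suffix allows it), and summing the indicator over the two adjacent values of $j$ reproduces $a_{n-1,k-1}+a_{n-1,k}$. I expect pinning down this insertion criterion cleanly — in particular ruling out spurious $321$ and $\underline{21}43$ patterns and getting the boundary cases right — to be the main obstacle.

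Once the recurrence and boundary values are in hand, the generating function follows by routine manipulation. Let $g(x,t)=\sum_{n\ge1}\sum_{k=1}^n a_{n,k}t^kx^n$. Multiply the recurrence $a_{n,k}=a_{n-1,k-1}+a_{n-1,k}$ by $t^kx^n$ and sum over $1<k<n$; the term $a_{n-1,k-1}t^kx^n$ contributes $tx$ times the sum over $\tau$-permutations, and $a_{n-1,k}t^kx^n$ contributes $x$ times it, so the interior double sum becomes $(x+tx)$ times (essentially) $g(x,t)$ minus the boundary corrections from $k=1$ and $k=n$. The $k=1$ column contributes $\sum_{n\ge1}t x^n=\frac{tx}{1-x}$ and the $k=n$ diagonal contributes $\sum_{n\ge1}F_{2n-3}(tx)^n$, whose generating function is a rational function of $tx$ obtained from the known $\sum_{m\ge1}F_{2m-1}y^m=\frac{y-y^2}{1-3y+y^2}$ shifted by one index — explicitly $\sum_{n\ge1}F_{2n-3}(tx)^n=\frac{tx(1-tx)}{1-3tx+t^2x^2}$ (after separating the $n=1$ term where $F_{-1}=1$). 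Substituting these into the functional equation $g=(x+tx)g+\text{(boundary terms)}$, solving the resulting linear equation for $g$, and simplifying over the common denominator $(1-x-tx)(1-3tx+t^2x^2)$ should yield
\[ g(x,t)=\frac{tx(1-tx)(1-2tx)}{(1-x-tx)(1-3tx+t^2x^2)}, \]
and I would verify the first few coefficients against Table~\ref{tab:triangle321_2143} as a sanity check.
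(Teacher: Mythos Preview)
Your boundary computations ($a_{n,1}=1$ and $a_{n,n}=F_{2n-3}$) are fine, and the generating-function assembly is the right shape (modulo a small slip: the diagonal series is $\sum_{n\ge 1}a_{n,n}y^n=\frac{y(1-2y)}{1-3y+y^2}$, not $\frac{y(1-y)}{1-3y+y^2}$). The real problem is the recurrence.

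Your plan is to delete $n$ and track the position $j$ of $n{-}1$ in the resulting $\tau\in\Av_{n-1}(321,\underline{21}43)$, hoping the valid insertion positions line up with $j\in\{k-1,k\}$. This does not work. For $n=5$, $k=3$, the seven elements of $\Av_5^{3\mapsto 5}(321,\underline{21}43)$ are
\[
12534,\;13524,\;14523,\;23514,\;24513,\;34512,\;41523,
\]
and deleting the $5$ yields permutations in which $4$ sits at positions $4,4,2,4,2,2,1$ respectively. None land at position $3$, one lands at position $1$, and the set of $\tau$ obtained is disjoint from $\Av_4^{3\mapsto 4}$ entirely. So the image under ``delete $n$'' is not $\Av_{n-1}^{(k-1)\mapsto(n-1)}\cup\Av_{n-1}^{k\mapsto(n-1)}$, and no version of your criterion ``$k\le j$ or $k=j+1$'' recovers the split. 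The insertion condition you identify (that $\tau(k),\dots,\tau(n-1)$ be increasing and that no adjacent descent before position $k$ be followed by a larger entry at or after position $k$) is correct, but it does not visibly count to $a_{n-1,k-1}+a_{n-1,k}$.

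The paper uses a different decomposition: split $\Av_n^{k\mapsto n}(321,\underline{21}43)$ according to whether $\sigma(1)=1$. If so, $\sigma=1\oplus\sigma'$ with $\sigma'\in\Av_{n-1}^{(k-1)\mapsto(n-1)}$, giving $a_{n-1,k-1}$. If not, then since $k<n$ we also have $\sigma(n)\neq n$, so $\sigma$ lies in the set $B_n$ from the proof of Proposition~\ref{prop:321_2143fibo}; the bijection $\varphi$ constructed there (which subtracts $1$ from each left-to-right maximum and preserves their positions) sends $\sigma$ to an element of $\Av_{n-1}^{k\mapsto(n-1)}$, giving $a_{n-1,k}$. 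The recurrence is thus a direct refinement of the three-set decomposition already established for $g_n=3g_{n-1}-g_{n-2}$, and requires no new insertion analysis.
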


\begin{proof}
The permutation $1\ominus\id_{n-1}$ is the only element of $\Av_n^{1\mapsto n}(321,\underline{21}43)$, so $a_{n,1}=1$. As discussed in the proof of Proposition~\ref{prop:321_2143fibo}, every element $\sigma\in\Av_n(321,\underline{21}43)$ can be uniquely obtained from an element $\sigma'\in\Av_{n-1}(321,\underline{21}43)$ as a direct sum, $1\oplus\sigma'$ or $\sigma'\oplus 1$, or by using the inverse map $\varphi^{-1}$. In particular, every permutation in $\Av_n^{n\mapsto n}(321,\underline{21}43)$ is of the form $\sigma'\oplus 1$, thus $a_{n,n} = \abs{\Av_{n-1}(321,\underline{21}43)} = F_{2n-3}$.

Now, for $1<k<n$, every $\sigma\in \Av_n^{k\mapsto n}(321,\underline{21}43)$ is either of the form $\sigma=1\oplus\sigma'$ with $\sigma'\in \Av_{n-1}^{k-1\mapsto n-1}(321,\underline{21}43)$ or $\sigma=\varphi^{-1}(\sigma')$ with $\sigma'\in \Av_{n-1}^{k\mapsto n-1}(321,\underline{21}43)$. Note that $\varphi$ and $\varphi^{-1}$ preserve the position of the largest element. In conclusion, $a_{n,k} = a_{n-1,k-1} + a_{n-1,k}$.

The expression for the generating function follows from routine algebraic manipulations, together with the generating function for the odd-indexed Fibonacci numbers.
\end{proof}

\begin{table}[ht]
\begin{tabular}{c|cccccccc}
$n\setminus k$ & 1 & 2 & 3 & 4 & 5 & 6 & 7 & 8 \\[2pt] \hline 
\R 1 & 1 &&&&&&& \\
2 & 1& 1 &&&&&& \\
3 & 1 & 2 & 2 &&&&& \\
4 & 1 & 3 & 4 & 5 &&&& \\
5 & 1 & 4 & 7 & 9 & 13 &&& \\
6 & 1 & 5 & 11 & 16 & 22 & 34 && \\
7 & 1 & 6 & 16 & 27 & 38 & 56 & 89 & \\
8 & 1 & 7 & 22 & 43 & 65 & 94 & 145 & 233
\end{tabular}
\bigskip 
\caption{Triangle for $\abs{\Av_n^{k\mapsto n}(321,\underline{21}43)}$. Reverse of A121460 in \cite{oeis}.}
\label{tab:triangle321_2143}
\end{table}

\subsection*{Block fountains of coins}
A {\em block $n$-fountain} of coins is an arrangement of coins in rows such that the bottom row consists of $n$ coins forming a contiguous block, and each higher row consists of a single contiguous block of coins where each coin touches exactly two coins from the row beneath it. For example:

\medskip
\begin{center}
\tikzstyle{coin}=[thick, color=black!60]
\begin{tikzpicture}[scale=1.3]
\begin{scope}
\foreach \x in {1,2,3,4,5,6} {\draw[coin] (0.5*\x,0) circle (0.25);}
\foreach \x in {3,5,7,9} {\draw[coin] (0.25*\x,0.44) circle  (0.25);}
\foreach \x in {6,8} {\draw[coin] (0.25*\x,0.88) circle  (0.25);}
\node[below=12pt] at (1.75,0) {Block $6$-fountain};
\end{scope}
\begin{scope}[xshift=120]
\foreach \x in {1,2,3,4,5} {\draw[coin] (0.5*\x,0) circle (0.25);}
\foreach \x in {3,7,9} {\draw[coin] (0.25*\x,0.44) circle  (0.25);}
\foreach \x in {8} {\draw[coin] (0.25*\x,0.88) circle  (0.25);}
\node[below=12pt] at (1.5,0) {Not a block fountain};
\end{scope}
\end{tikzpicture}
\end{center}

\medskip
As shown in Wilf~\cite[Section 2.1, Example 7]{Wgfology}, there are $F_{2n-1}$ block $n$-fountains. 

A coin in a block fountain is called a {\em peak} if it doesn't touch any coin in a higher row. A block $n$-fountain with maximum number of coins will be called a {\em triangular $n$-stack}. Note that a triangular stack has only one peak.

\begin{proposition}
There is a bijection between block $n$-fountains and $(321,\underline{21}43)$-avoiding permutations of size $n$.
\end{proposition}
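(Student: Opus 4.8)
The plan is to build the bijection recursively, mirroring the three-way decomposition $\Av_n(321,\underline{21}43)=A_n^{1*}\cup A_n^{*n}\cup B_n$ from the proof of Proposition~\ref{prop:321_2143fibo}, and to match it with an analogous three-way decomposition of the set $\mathcal{F}_n$ of block $n$-fountains. First I would set up base cases: the unique block $1$-fountain corresponds to the permutation $1$, and the two block $2$-fountains (one row of two coins; two coins with one coin above) correspond to $12$ and $21$. For the recursive step, I would look at a block $n$-fountain $f$ and ask how the second row sits over the bottom row: whether the leftmost coin of the bottom row is \emph{covered} (touched by a coin in row two) and whether the rightmost coin of the bottom row is covered. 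This gives three cases (it cannot be that the bottom row has length $n\ge 3$ and \emph{every} bottom coin is simultaneously the leftmost-uncovered and rightmost-uncovered, so the cases are genuinely a partition once we also allow "row two is empty"): (i) the leftmost bottom coin is uncovered; (ii) the leftmost is covered but the rightmost is uncovered; (iii) both ends are covered. Removing the uncovered end coin (or, in case (iii), peeling off the entire structure and reducing the width by two) yields a block $(n-1)$- or $(n-2)$-fountain, and I would declare the bijection to send case (i) to $1\oplus\sigma'$, case (ii) to $\sigma'\oplus 1$ composed appropriately, and case (iii) to the image of $\varphi^{-1}$.

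The key steps, in order, are: (1) verify the three-way decomposition of $\mathcal{F}_n$ is a genuine disjoint partition and that the reduction operations (delete an uncovered boundary coin from the bottom row; delete the two boundary coins together with the "staircase" above them) are well-defined and land in $\mathcal{F}_{n-1}$ or $\mathcal{F}_{n-2}$, producing a valid block fountain in each case; (2) check that these reductions are reversible, so each case is itself a bijection onto its target; (3) invoke the recursively constructed bijection on the smaller fountains together with the maps $\sigma'\mapsto 1\oplus\sigma'$, $\sigma'\mapsto\sigma'\oplus 1$, and $\varphi^{-1}$ from Proposition~\ref{prop:321_2143fibo}, which are already known to be bijections onto $A_n^{1*}$, $A_n^{*n}$, $B_n$ respectively; (4) conclude by induction on $n$, using that $|\mathcal{F}_n|$ and $|\Av_n(321,\underline{21}43)|$ satisfy the same recurrence \eqref{eq:fiboRecurrence} with the same initial values (the former by Wilf~\cite[Section 2.1, Example 7]{Wgfology}, the latter by Proposition~\ref{prop:321_2143fibo}), so matching the recursive cases term by term forces a bijection. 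It is worth spelling out explicitly what the resulting map does to small fountains and recording that, under it, the number of coins in the fountain corresponds to a natural statistic on the permutation — for instance, the total number of coins equals $n$ plus the number of descents, or peaks of the fountain correspond to left-to-right maxima $\sigma(i_j)$ with $\sigma(i_j)>i_j$; this makes the bijection "reveal structure" in the spirit of the paper.

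The main obstacle I anticipate is case (iii), the analogue of the map $\varphi$. On the permutation side, $\varphi$ subtracts $1$ from every left-to-right maximum value and re-assembles the $321$-avoiding permutation from its (shifted) left-to-right maxima; on the fountain side I need to identify which structural feature of a block $n$-fountain with both bottom-end coins covered corresponds to this, and then show the induced correspondence respects the avoidance of $\underline{21}43$ exactly as the argument in Proposition~\ref{prop:321_2143fibo} does. Concretely, I expect that when both end coins of the bottom row are covered, the second row is a contiguous block of length between $2$ and $n-2$ sitting strictly inside, and the whole fountain above the bottom row is itself a block $(n-2)$-fountain after a width-$2$ contraction; the delicate point is checking that this contraction is the \emph{unique} way to recover $f$ and that it interacts correctly with the peak/descent statistic so that the induction on the positional refinement (Table~\ref{tab:triangle321_2143}, the recurrence $a_{n,k}=a_{n-1,k-1}+a_{n-1,k}$) also goes through if one wants to track the position of $n$. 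The remaining verifications — disjointness of the three cases, and that deleting an uncovered boundary coin never destroys the block (contiguity) property of higher rows — are routine but need to be stated carefully, since a careless deletion could disconnect row two.
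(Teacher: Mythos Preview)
Your strategy---a recursive bijection matching the three-way split $A_n^{1*}\cup A_n^{*n}\cup B_n$ of Proposition~\ref{prop:321_2143fibo}---is genuinely different from the paper's, which gives a direct, non-recursive map: the peaks of the fountain determine the positions and values of the left-to-right maxima of $\sigma$ (a peak whose diagonals meet bottom coins $i$ and $j$ with $i<j$ yields $\sigma(i)=j$; an exposed bottom coin $i$ gives $\sigma(i)=i$), and $\sigma$ is then the unique $321$-avoider with those maxima. The paper checks directly that any gap between two peaks in a row above the bottom forces a $\underline{21}43$ pattern, so block fountains correspond exactly to the $\underline{21}43$-avoiders.

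Your recursive plan is workable, but case~(iii) is misdescribed and, as written, would not close. If both end coins of the bottom row are covered, the second row is \emph{not} ``a contiguous block of length between $2$ and $n-2$ sitting strictly inside'': a second-row coin touching bottom coin $1$ must sit over the gap between coins $1$ and $2$, and one touching bottom coin $n$ must sit over the gap between $n-1$ and $n$, so contiguity forces the second row to be the full row of $n-1$ coins. The correct reduction is therefore to delete the bottom row entirely, leaving a block $(n-1)$-fountain---not a width-two contraction to an $(n-2)$-fountain. With this fix your three cases have sizes $g_{n-1}$, $g_{n-1}-g_{n-2}$, $g_{n-1}$, matching $|A_n^{1*}|$, $|A_n^{*n}|$, $|B_n|$ exactly, and the induction closes. (For case~(ii), note that deleting the rightmost coin yields an $(n-1)$-fountain whose leftmost bottom coin is still covered; these are precisely the $(n-1)$-fountains not in case~(i), matching $\sigma'\oplus 1$ with $\sigma'(1)\neq 1$.)

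Once corrected, your approach yields a valid bijection, and one can check it actually agrees with the paper's direct map: prepending $1$ corresponds to adding an exposed coin on the left, applying $\varphi^{-1}$ (shifting all left-to-right maxima up by one) corresponds to sliding a new bottom row under the whole fountain, and appending $n$ adds an exposed coin on the right. The paper's version has the advantage of making the correspondence between peaks and left-to-right maxima explicit in one step, which is what drives the positional refinement in Table~\ref{tab:triangle321_2143}; your version makes the recurrence $a_{n,k}=a_{n-1,k-1}+a_{n-1,k}$ visible instead.
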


\begin{proof}
We will provide an algorithm to go from block fountains to permutations, illustrating the steps with an example. Given a block $n$-fountain, proceed as follows:
\begin{enumerate}[$(i)$]
\item Going from left to right, label the coins on the bottom row with the elements of $[n]$ in increasing order.
\item Identify the peaks of the fountain. For every peak not at the bottom, draw diagonals (of slope $\pm\sqrt{3}$) from the center of the peak to the two coins on the bottom row along those diagonals. For example:
\begin{center}
\tikzstyle{coin}=[thick, color=black!60]
\tikzstyle{coin2}=[color=olive!25, thick, draw=black!60]
\begin{tikzpicture}[scale=1.5]
\foreach \x in {1,2,3,4,5} {\draw[coin] (0.5*\x,0) circle (0.25) node[black!80]{\small \x};}
\fill[coin2] (3,0) circle (0.25) node[black!80]{\small 6};
\foreach \x in {5,7,9} {\draw[coin] (0.25*\x,0.44) circle  (0.25);}
\fill[coin2] (0.75,0.44) circle (0.25);
\foreach \x in {6,8} {\fill[coin2] (0.25*\x,0.88) circle  (0.25);}
\draw[thick,red1!70] (0.56,0.1) -- (0.75,0.44) circle (0.01) -- (0.94,0.1); 
\draw[thick,red1!70] (1.06,0.1) -- (1.5,0.88) circle (0.01) -- (1.94,0.1); 
\draw[thick,red1!70] (1.06,0.1) -- (1.5,0.88) circle (0.01) -- (1.94,0.1); 
\draw[thick,red1!70] (1.56,0.1) -- (2,0.88) circle (0.01) -- (2.44,0.1); 
\end{tikzpicture}
\end{center}

\item If a bottom coin $i$ is a peak, we let $\sigma(i)=i$. Otherwise, if a peak at a higher row has diagonals pointing to the bottom coins $i$ and $j$ with $i<j$, we let $\sigma(i)=j$. 
\item If the fountain has $k$ peaks, the previous steps give positions $1=i_1<\dots<i_k$ and corresponding values $\sigma(i_1)<\dots<\sigma(i_k)=n$. We let $\sigma$ be the unique permutation in $\Av_n(321)$ having left-to-right maxima $\sigma(i_1), \dots, \sigma(i_k)$.
\end{enumerate}
 
For the above example, we get the permutation $\perm{2,4,5,1,3,6}$.

Suppose $\sigma$ contains a $\underline{21}43$ pattern at positions $(i,i+1,j,k)$ with $i+1<j<k$. This means that $\sigma(i)$ and $\sigma(j)$ are left-to-right maxima, and $\sigma(i+1)<\sigma(i)<\sigma(k)<\sigma(j)$. In particular, $j-i\ge 2$ and $\sigma(j)-\sigma(i)\ge 2$. Without loss of generality, assume $j-i = 2$ and $\sigma(j)-\sigma(i) = 2$. In this case, part of the fountain must have two peaks and be of the form:
\begin{center}
\tikzstyle{coin}=[thick, draw=black!60]
\tikzstyle{coin2}=[color=olive!25, thick, draw=black!60]
\begin{tikzpicture}[scale=1.4,baseline=-15]
\foreach \x in {1,2,3,4} {\draw[coin] (0.5*\x,0) circle (0.25);}
\foreach \x in {3,7} {\fill[coin2] (0.25*\x,0.44) circle  (0.25);}
\draw[thick,red1!70] (0.75,0.44) -- +(240:35pt) node[below = 8pt, left = -2pt,black!80] {$i$}; 
\draw[thick,red1!70] (0.75,0.44) -- +(-60:35pt) node[below = 8pt, right = -6pt,black!80] {$\sigma(i)$}; 
\draw[thick,red1!70] (1.75,0.44) -- +(240:35pt) node[below = 8pt, left = -2pt,black!80] {$j$}; 
\draw[thick,red1!70] (1.75,0.44) -- +(-60:35pt) node[below = 8pt, right = -6pt,black!80] {$\sigma(j)$}; 
\node at (0.75,-0.45) {\large $\dots$};
\node at (1.75,-0.45) {\large $\dots$};
\end{tikzpicture},
\end{center}
which is not a block fountain because the coins at the top row are not contiguous. More generally, if $j-i>2$, the gap between the peaks would be longer, and if $\sigma(j)-\sigma(i) > 2$, the peak along diagonal $j$ would just be higher.

In conclusion, all permutations constructed as above avoid both $321$ and $\underline{21}43$.

The inverse map is obtained by reversing the algorithm. Given $\sigma \in \Av_n(321,\underline{21}43)$ let $i_1,\dots,i_k$ be the locations of its left-to-right maxima, listed in increasing order. Note that for every $j\in\{1,\dots,k\}$, we must have $\sigma(i_j)\ge i_j$. We start our fountain construction by placing a block of $n$ coins as base, labeling them with the elements of $[n]$ in increasing order. Then, for every $j$, we place the needed coins to make a triangular $(\sigma(i_j)-i_j+1)$-stack whose base are the coins labeled $i_j$ through $\sigma(i_j)$. As we did above, one can argue that any gap of coins in a row of the fountain would imply a $\underline{21}43$ pattern in $\sigma$. In other words, the fountain resulting from our construction must be a block $n$-fountain.
\end{proof}

\section{Boolean permutations}
\label{sec:321_3412}

Boolean permutations are those that avoid the patterns $321$ and $3412$. They are known to be enumerated by the odd-indexed Fibonacci numbers, see Tenner \cite{Tenner07}.

\begin{proposition}
$\abs{\Av_n(321,3412)} = F_{2n-1}$.
\end{proposition}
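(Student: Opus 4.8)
The plan is to establish the recurrence $g_n = 3g_{n-1} - g_{n-2}$ for $g_n = \abs{\Av_n(321,3412)}$, mirroring the structure of the previous two propositions. Since the sequence is again determined by $g_1 = 1$, $g_2 = 2$, and this recurrence, the result follows once the recurrence is verified. I would again organize the argument around the position of the largest entry $n$ in $\sigma$.

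First I would observe that a $321$-avoiding permutation writes as the interleaving of two increasing sequences: the left-to-right maxima and the remaining entries. For such a $\sigma$, the entry $n$ sits at some position $k$, and everything to the right of $n$ must be increasing (otherwise, together with $n$ we get a $321$). So $\sigma = \rho \oplus 1 \oplus \cdots$ fails; more precisely, if $n$ is at position $k$ then $\sigma$ restricted to positions $k+1, \dots, n$ is an increasing run, say the values $S$, and $\sigma$ restricted to positions $1, \dots, k-1$ together with the value $n$ at position $k$ is a $(321,3412)$-avoider of size $k$ ending in its maximum. The key structural point I would prove is that the extra $3412$-avoidance forces the "tail" to be short in a controlled way. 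Concretely, I expect the cleanest decomposition to be: let $\sigma \in \Av_n(321,3412)$ with $n \ge 3$. Either $\sigma$ ends with $n$ (position $n$), giving $g_{n-1}$ permutations by deleting $n$; or $\sigma(n-1) = n$, again giving $g_{n-1}$ by deleting $n$ and then noting the resulting permutation of size $n-1$ can be any element of $\Av_{n-1}(321,3412)$ — wait, this needs care, since we must check when re-inserting $n$ at the second-to-last slot is legal. As in the $4123$ case, inserting $n$ at position $n-1$ of a size-$(n-1)$ permutation creates a $321$ exactly when that permutation ends in a descent, and a $(321,3412)$-avoider of size $n-1$ ending in a descent has its maximum at position $n-1$; the number of such is $g_{n-2}$. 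Finally I must count $\sigma$ with $n$ at position $\le n-2$: I would argue that for a Boolean permutation, $n$ cannot be earlier than position $n-2$ at all — if $\sigma(k) = n$ with $k \le n-3$, then there are at least three entries to the right of $n$ forming an increasing run $\sigma(k+1) < \sigma(k+2) < \sigma(k+3)$, and since $n$ is not at position $1$ there is an entry $\sigma(j)$ with $j < k$ and $\sigma(j) < n$, hence $\sigma(j), \sigma(k) = n$, and then any two of the three tail entries can be chosen to complete a $3412$ pattern provided $\sigma(j) > \sigma(k+1)$ for some arrangement — this inequality needs verification, and that is the delicate point.

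So the main obstacle I anticipate is pinning down exactly why $n$ must appear among the last three positions (and, for the middle case, characterizing precisely which size-$(n-1)$ avoiders admit insertion of $n$ two slots from the end). The $3412$ pattern is subtler than $4123$: it is not enough that $n$ has three increasing entries after it; one also needs a small entry before $n$ that lies below the top of that increasing tail. I would handle this by exploiting the two-increasing-sequences structure: writing $\sigma$ as left-to-right maxima $\sigma(i_1) < \cdots < \sigma(i_m) = n$ at positions $i_1 < \cdots < i_m$, interleaved with the increasing sequence of non-maxima, and then translating "$\sigma$ avoids $3412$" into a statement about these two sequences. A $3412$ occurrence needs two left-to-right maxima $\sigma(i_a) < \sigma(i_b)$ followed (positionally) by two non-maxima, the smaller first, both below $\sigma(i_a)$. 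So the condition that $n = \sigma(i_m)$ has $\ge 3$ entries after it, combined with $m \ge 2$, should force such an occurrence unless the configuration is extremely constrained; I would make this a short lemma. Once the "last three positions" claim and the insertion criterion are in hand, the count $g_n = g_{n-1} + g_{n-1} + (g_{n-1} - g_{n-2})$ — no, rather $g_n = g_{n-1} + g_{n-1} + (\text{count for position } n-2)$; I would determine that last count to be $g_{n-1} - 2g_{n-2}$ or similar so the total telescopes to $3g_{n-1} - g_{n-2}$, matching \eqref{eq:fiboRecurrence}.

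Alternatively, if the position-of-$n$ bookkeeping proves messy, I would fall back on a generating-function or transfer-matrix argument using Tenner's structural description of Boolean permutations (each such permutation corresponds to a set of non-overlapping "blocks" built from adjacent transpositions), from which the generating function $\dfrac{x - x^2}{1 - 3x + x^2}$ can be read off directly; but since the paper's stated goal is elementary self-contained proofs, I would prefer the direct recurrence and only cite Tenner~\cite{Tenner07} as the source of the statement. In any case, the conclusion is $g_1 = 1$, $g_2 = 2$, $g_n = 3g_{n-1} - g_{n-2}$ for $n \ge 3$, hence $g_n = F_{2n-1}$.
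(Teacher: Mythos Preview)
Your central structural claim---that in a Boolean permutation $n$ must occupy one of the last three positions---is false, and this is where the plan breaks down. The permutations $15234$ and $25134$ both lie in $\Av_5(321,3412)$ with $5$ at position~$2$; more generally $2\,n\,1\,3\,4\cdots(n{-}1)\in\Av_n(321,3412)$ has $n$ at position~$2$ for every $n\ge 3$. The ``delicate point'' you flag is exactly the failure: forcing a $3412$ pattern from $\sigma(k)=n$ with $k\le n-3$ would require some entry $\sigma(j)$ with $j<k$ to exceed \emph{two} of the tail entries, and nothing guarantees that. You also misremember the $4123$ argument: there the obstruction arose when inserting $n$ at the \emph{third}-to-last position. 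Inserting $n$ at the second-to-last position of any $\sigma'\in\Av_{n-1}(321,3412)$ never creates a $321$ or a $3412$ (only one entry follows $n$), so that case really does contribute a clean $g_{n-1}$ with no correction term.

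The paper's proof tracks the position of $1$ instead. The cases are $\sigma(1)=1$, $\sigma(2)=1$, and $\sigma(j)=1$ with $j\ge 3$; the first two each give $g_{n-1}$. For the third, one shows $\sigma(1)=2$ is forced: if $2$ sits at a position in $\{2,\dots,j-1\}$ then $(\sigma(1),2,1)$ is a $321$, while if $2$ sits to the right of $1$ then $(\sigma(1),\sigma(2),1,2)$ is a $3412$. Deleting that leading $2$ gives a bijection with $\{\sigma'\in\Av_{n-1}(321,3412):\sigma'(1)\ne 1\}$, of size $g_{n-1}-g_{n-2}$, and the recurrence follows. If you prefer to organize around the position of~$n$, the reverse-complement of this argument is what works: when $\sigma(k)=n$ with $k\le n-2$ one has $\sigma(n)=n-1$, and deleting that trailing $n-1$ yields the analogous bijection.
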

\begin{proof}
Let $g_n = \abs{\Av_n(321,3412)}$. As stated before, it is clear that $g_1=1$ and $g_2=2$. Now, for $n\ge 3$ and $\sigma\in \Av_n(321,3412)$, we consider the disjoint cases $\sigma(1)=1$, $\sigma(2)=1$, or $\sigma(j)=1$ for some $j>2$. Every $\sigma$ with $\sigma(1)=1$ or $\sigma(2)=1$ can be uniquely obtained from a $\sigma'\in \Av_{n-1}(321,3412)$ by increasing all the entries of $\sigma'$ by one and inserting the $1$ either at position 1 or position 2, respectively. Thus there are $2g_{n-1}$ such permutations.

If $\sigma$ has the $1$ at position $j>2$, then $\sigma(1)=2$; otherwise entry $2$ would create either a $321$ pattern (if it is to the left of $1$, but not at position 1) or a $3412$ pattern (if it is to the right of $1$). Such a permutation can be uniquely obtained by inserting the $2$ at position 1 into a permutation from $\Av_{n-1}(321,3412)$ that does not start with $1$. There are $g_{n-1} - g_{n-2}$ permutations of this type.

In conclusion, $g_1=1$, $g_2=2$, and $g_n = 3g_{n-1} - g_{n-2}$ for $n\ge 3$. Hence $g_n=F_{2n-1}$.
\end{proof}

\medskip
Similar to Section~\ref{sec:321_2143}, we consider the set $\Av_n^{k\mapsto1}(321,3412)$ of permutations in $\Av_n(321,3412)$ having the $1$ in position $k$, i.e.\ $\sigma(k)=1$. 

\begin{proposition}
For $n> 1$, we have $\abs{\Av_n^{1\mapsto1}(321,3412)} = \abs{\Av_n^{2\mapsto1}(321,3412)} = F_{2n-3}$, and for $3\le k\le n$, we have $\abs{\Av_n^{k\mapsto1}(321,3412)} = \abs{\Av_{n-1}^{k-1\mapsto1}(321,3412)}$. Some terms are listed in Table~\ref{tab:triangle321_3412}. Moreover, the function $g(x,t) = \sum\limits_{n\ge 1}\sum\limits_{k=1}^n \abs{\Av_n^{k\mapsto1}(321,3412)}\, t^k x^n$ satisfies
\[ g(x,t) = \frac{tx-2tx^2+t^2x^3}{(1-tx)(1-3x+x^2)}. \]
\end{proposition}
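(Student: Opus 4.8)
The plan is to prove the three recurrences first and then assemble the generating function, following the template already established in Sections~\ref{sec:321_4123} and~\ref{sec:321_2143}. For the boundary column $k\le 2$, I would reuse the case analysis from the proof that $\abs{\Av_n(321,3412)}=F_{2n-1}$: there it is shown that every $\sigma\in\Av_n(321,3412)$ with $\sigma(1)=1$ or $\sigma(2)=1$ comes from a unique $\sigma'\in\Av_{n-1}(321,3412)$ by shifting up and inserting $1$ at position $1$ or $2$. Hence $\abs{\Av_n^{1\mapsto1}(321,3412)}=\abs{\Av_n^{2\mapsto1}(321,3412)}=g_{n-1}=F_{2n-3}$, giving the first claim directly.

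For $3\le k\le n$, I would exploit the structural fact (again from the earlier proof) that if $\sigma(j)=1$ with $j>2$, then forcibly $\sigma(1)=2$, and such $\sigma$ are in bijection with permutations in $\Av_{n-1}(321,3412)$ that do not start with $1$, via deletion of the leading $2$ (and shifting the remaining entries down by one). The key observation is that this bijection sends the position $k$ of the $1$ in $\sigma$ to the position $k-1$ of the $1$ in the resulting permutation of size $n-1$. So I would need to check that the image permutations — those not starting with $1$ — having their $1$ at position $k-1$ are exactly counted by $\abs{\Av_{n-1}^{k-1\mapsto1}(321,3412)}$; this requires noting that when $k-1\ge 2$, a $(321,3412)$-avoiding permutation with $1$ at position $k-1$ automatically does not start with $1$, so the "does not start with $1$" constraint is vacuous. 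That yields $\abs{\Av_n^{k\mapsto1}(321,3412)}=\abs{\Av_{n-1}^{k-1\mapsto1}(321,3412)}$ for $k\ge 3$.

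The generating function then follows by routine bookkeeping. Writing $c_{n,k}=\abs{\Av_n^{k\mapsto1}(321,3412)}$, the recurrence $c_{n,k}=c_{n-1,k-1}$ for $k\ge 3$ means that columns $k\ge 3$ are just shifted copies of earlier columns, so $\sum_{n,k\ge 3}c_{n,k}t^kx^n = tx\sum_{n,k\ge 2}c_{n,k}t^kx^n$ once one accounts correctly for where column $2$ feeds into column $3$. Combining this with the closed form $c_{n,1}=c_{n,2}=F_{2n-3}$ for the first two columns and the known generating function $\sum_{m\ge1}F_{2m-1}x^m = \frac{x-x^2}{1-3x+x^2}$, one solves a short linear equation for $g(x,t)$ and simplifies to the stated $\frac{tx-2tx^2+t^2x^3}{(1-tx)(1-3x+x^2)}$. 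As a sanity check I would verify that setting $t=1$ recovers $\frac{x-x^2}{1-3x+x^2}$ and that the low-order coefficients match Table~\ref{tab:triangle321_3412}.

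I expect the main obstacle to be the $k\ge 3$ step: specifically, making airtight the claim that deleting the forced leading $2$ is a genuine bijection onto $(321,3412)$-avoiding permutations of size $n-1$ with the $1$ at position $k-1\ge2$, and confirming that no pattern is created or destroyed in a way that shifts the count. The forward direction (deletion preserves avoidance) is immediate since deleting an entry cannot create a pattern; the subtlety is the inverse — that prepending $2$ (after shifting up) to any such permutation never introduces a $321$ or $3412$, which relies precisely on the $1$ sitting at position $k-1\ge 2$ so that the new leading $2$ has the old position of $1$ immediately reachable and cannot serve as the "$3$" of a forbidden pattern together with two later ascending small values. Everything else is the same direct-sum and generating-function manipulation already used twice in the paper.
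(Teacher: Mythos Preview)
Your proposal is correct and follows essentially the same approach as the paper: both arguments recycle the case analysis from the proof that $\abs{\Av_n(321,3412)}=F_{2n-1}$, using the insertion of $1$ at positions $1$ or $2$ for the boundary columns and the forced $\sigma(1)=2$ (hence the delete-the-leading-$2$ bijection) for $k\ge 3$, with the generating function then obtained by routine manipulation. The only difference is cosmetic---the paper simply cites the earlier proposition without rediscussing the inverse direction you flag as the main obstacle, since that bijection was already established there.
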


\begin{proof}
This follows from the discussion in the proof of the previous proposition. First of all, every $\sigma\in \Av_{n}(321,3412)$ with $\sigma(1)=1$ or $\sigma(2)=1$ can be uniquely obtained from a permutation in $\Av_{n-1}(321,3412)$ by inserting the $1$ at the corresponding position. This leads to the claimed formula for $k=1$ and $k=2$. For $k\ge 3$, every element of $\Av_n^{k\mapsto1}(321,3412)$ can be uniquely obtained by inserting $2$ at position $1$ into an element of $\Av_{n-1}^{k-1\mapsto1}(321,3412)$. This proves the claimed formula for $k\ge 3$.

As in previous propositions, the claimed rational expression for $g(x,t)$ follows from straightforward algebraic manipulations.
\end{proof}

\begin{table}[ht]
\begin{tabular}{c|cccccccc}
$n\setminus k$ & 1 & 2 & 3 & 4 & 5 & 6 & 7 & 8 \\[2pt] \hline 
\R 1 & 1 &&&&&&& \\
2 & 1& 1 &&&&&& \\
3 & 2 & 2 & 1 &&&&& \\
4 & 5 & 5 & 2 & 1 &&&& \\
5 & 13 & 13 & 5 & 2 & 1 &&& \\
6 & 34 & 34 & 13 & 5 & 2 & 1 && \\
7 & 89 & 89 & 34 & 13 & 5 & 2 & 1 & \\
8 & 233 & 233 & 89 & 34 & 13 & 5 & 2 & 1
\end{tabular}
\bigskip 
\caption{Triangle for $\abs{\Av_n^{k\mapsto1}(321,3412)}$.}
\label{tab:triangle321_3412}
\end{table}

\subsection*{Noncrossing, nonnesting set partitions}
We finish this section with a discussion on noncrossing, nonnesting set partitions, and their representation via arc diagrams. Our goal is to provide a simple bijection between these combinatorial objects and the set of $(321,3412)$-avoiding permutations.

A partition of the set $[n]=\{1,\dots,n\}$ is a set of disjoint nonempty sets (called blocks) whose union is $[n]$. Every partition $\pi$ of $[n]$ can be represented by an arc diagram obtained by drawing an arc between each pair of integers that appear consecutively in the same block of $\pi$. For example, the partition $\pi = \{\{1,3\}, \{2,4,8\}, \{5,7\}, \{6\}\}$ can be represented as
\medskip
\begin{center}
\drawarc{0.7}{8}{1/3,2/4,4/8,5/7}
\end{center}
Two arcs $(i_1,j_1)$ and $(i_2,j_2)$ make a {\em crossing} if $i_1<i_2<j_1<j_2$, and they make a {\em nesting} if $i_1<i_2<j_2<j_1$. A {\em noncrossing/nonnesting partition} is a partition with no crossings/nestings. Moreover, a partition of $[n]$ is said to be {\em indecomposable} if no subset of its blocks is a partition of $[k]$ with $k<n$. In other words, an indecomposable partition is one whose corresponding arc diagram cannot be separated into two disjoint arc diagrams with consecutive nodes. The partition $\{\{1,3\}, \{2,4,8\}, \{5,7\}, \{6\}\}$, shown above, is an example of an indecomposable partition with one crossing and one nesting.

We let $\mathcal{P}_{\rm ncn}(n)$ be the set of partitions of $[n]$ that are both noncrossing and nonnesting, and let $\mathcal{P}_{\rm ncn}^{i}(n)$ be the subset of such partitions that are indecomposable. Note that every partition in $\mathcal{P}_{\rm ncn}^{i}(n)$ consists of a block of size $j$ for some $j\ge 2$, containing $1$ and $n$, together with $n-j$ singletons. It is known that $\abs{\mathcal{P}_{\rm ncn}(n)} = F_{2n-1}$, see Marberg~\cite[Example~4.2]{Mar13}.

\begin{proposition}
\label{prop:nonNestCrossBijection}
There is a bijection between indecomposable, noncrossing, nonnesting set partitions of $[n]$ and $(321,3412)$-avoiding indecomposable permutations on $[n]$.\footnote{A permutation is called {\em indecomposable} if it is not a direct sum of two nonempty permutations.}
\end{proposition}

\begin{proof}
First, we map $\{1\}\mapsto 1$ and $\{1,2\}\mapsto 21$. 

Let $n\ge 3$ and $\pi \in \mathcal{P}_{\rm ncn}^{i}(n)$, say $\pi = \{\{u_0,\dots,u_j\}, \{s_1\},\dots,\{s_k\}\}$ with $j+1+k=n$, $u_0=1$, $u_j=n$, $u_0<u_1<\cdots<u_{j}$, and $2\le s_1<\cdots<s_k\le n-1$. We let $\sigma_\pi$ be the unique element of $\Av_n(321)$ with left-to-right maxima $u_1,\dots,u_j$ at positions $1,u_1,\dots,u_{j-1}$, respectively. In other words, if we let $s_0=1$, then
\begin{gather*}
\sigma_\pi(1) = u_1, \;\; \sigma_\pi(n) = s_k, \\[2pt]
\begin{aligned}
\sigma_\pi(u_{i-1}) &= u_i \text{ for } i\in\{2,\dots,j\}, \\
\sigma_\pi(s_i) &= s_{i-1} \text{ for } i\in\{1,\dots,k\}.
\end{aligned}
\end{gather*}
For example, the partition $\{\{1,2,4,5,8\},\{3\},\{6\},\{7\}\} \in \mathcal{P}_{\rm ncn}^{i}(8)$ corresponds in one-line notation to the permutation $\perm{2,4,1,5,8,3,6,7}$. This map can be visualized as follows:
\begin{center}
\drawarcPerm{0.7}{8}{1/2,2/4,4/5,5/8}{1/3,3/6,6/7,7/8} $\leadsto\;\; \perm{2,4,1,5,8,3,6,7}$
\end{center}
The elements of $\mathcal{P}_{\rm ncn}^{i}(5)$ are listed in Table~\ref{tab:nonNesCrossBijection} together with their corresponding permutations.

Observe that the permutation $\sigma_\pi$ consists of a single cycle, $\sigma_\pi = (1\; u_1\, \cdots\, u_j \; s_k\, \cdots\, s_1)$, and is therefore indecomposable. 

If $\sigma_\pi$ were to contain a $3412$ pattern, then the $3$ and $4$ will have to be left-to-right maxima (otherwise $\sigma_\pi$ would have a $321$ pattern). Let $u_{i-1}$ and $u_i$ be nodes in the arc diagram of $\pi$ connected by a single arc:
\begin{center}
\begin{tikzpicture}
	\foreach \i/\n in {1/u_{i-1},3/u_{i}} {%
		\draw[fill,arc1] (\i,0) circle (0.06);
		\node[below=2pt, gray] at (\i,0) {\scriptsize $\n$};
	}
	\foreach \i/\n in {1/u_{i},3/u_{i+1}} {\node[above=3pt] at (\i,0) {\small $\n$};}
	\draw[arc1] (1,0) parabola bend (2,0.4) (3,0);
	\draw[arc1] (3,0) to [out=35,in=200] (3.3,0.2);
	\node[thick,color=arcColor] at (2,0) {\huge \dots};
\end{tikzpicture}
\end{center}
By our construction of $\sigma_\pi$ (in one-line notation), there are $u_{i-1}-1$ values to the left of and smaller than $u_i$, and $u_i-u_{i-1}-1$ values of $\sigma_\pi$ strictly between the entries $u_i$ and $u_{i+1}$ (all smaller than $u_i$). Thus, there are $(u_{i-1}-1) + (u_i-u_{i-1}-1) = u_i-2$ numbers smaller than $u_i$ to the left of $u_{i+1}$. Therefore, there can be at most one element smaller than $u_i$ to the right of $u_{i+1}$, which means that a $3412$ pattern is not possible.

In conclusion, $\sigma_\pi$ is an indecomposable element of $\Av_n(321,3412)$. 

This map is clearly invertible. Given an indecomposable $\sigma\in\Av_n(321,3412)$ with left-to-right maxima $u_1,\dots,u_j$, listed in increasing order, we must have $u_1>1$, $\sigma^{-1}(u_1)=1$, $u_j=n$, and $\sigma^{-1}(u_j)<n$. We then construct an arc diagram with $n$ nodes, labeled $1$ through $n$ from left to right, by connecting the first and last nodes with consecutive arcs passing through the nodes labeled $1, u_1,\dots, u_{j}$. The resulting diagram gives the indecomposable, noncrossing, nonnesting partition of $[n]$,
\[ \pi_\sigma=\{\{1,u_1,\dots,u_j\},\{s_1\},\dots,\{s_{n-j-1}\}\}, \]
where $s_1,\dots, s_{n-j-1}$ are the labels of the $n-j-1$ isolated nodes of the diagram.
\end{proof}

\begin{table}[ht]
\def\eps{0.4}
\scriptsize
\begin{tabular}{c|c|c}
$\pi$ & Diagram & $\sigma_\pi$ \\[2pt] \hline\hline
$\{\{1,2,3,4,5\}\}$ & \drawarcPerm{\eps}{5}{1/2,2/3,3/4,4/5}{1/5} & \perm{2,3,4,5,1} \\
$\{\{1,2,3,5\},\{4\}\}$ & \drawarcPerm{\eps}{5}{1/2,2/3,3/5}{1/4,4/5} & \perm{2,3,5,1,4} \\
$\{\{1,2,4,5\},\{3\}\}$ & \drawarcPerm{\eps}{5}{1/2,2/4,4/5}{1/3,3/5} & \perm{2,4,1,5,3} \\
$\{\{1,3,4,5\},\{2\}\}$ & \drawarcPerm{\eps}{5}{1/3,3/4,4/5}{1/2,2/5} & \perm{3,1,4,5,2} \\
\end{tabular}
\hspace{3em}
\begin{tabular}{c|c|c}
$\pi$ & Diagram & $\sigma_\pi$ \\[2pt] \hline\hline
$\{\{1,2,5\},\{3\},\{4\}\}$ & \drawarcPerm{\eps}{5}{1/2,2/5}{1/3,3/4,4/5} & \perm{2,5,1,3,4} \\
$\{\{1,3,5\},\{2\},\{4\}\}$ & \drawarcPerm{\eps}{5}{1/3,3/5}{1/2,2/4,4/5} & \perm{3,1,5,2,4} \\
$\{\{1,4,5\},\{2\},\{3\}\}$ & \drawarcPerm{\eps}{5}{1/4,4/5}{1/2,2/3,3/5} & \perm{4,1,2,5,3} \\
$\{\{1,5\},\{2\},\{3\},\{4\}\}$ & \drawarcPerm{\eps}{5}{1/5}{1/2,2/3,3/4,4/5} & \perm{5,1,2,3,4} \\
\end{tabular}
\bigskip 
\caption{Bijection between $\mathcal{P}_{\rm ncn}^{i}(5)$ and indecomposable elements of $ \Av_5(321,3412)$.}
\label{tab:nonNesCrossBijection}
\end{table}

\begin{corollary}
There is a bijection between noncrossing, nonnesting set partitions of $[n]$ and $(321,3412)$-avoiding  permutations on $[n]$.
\end{corollary}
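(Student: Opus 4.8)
The plan is to bootstrap the corollary from Proposition~\ref{prop:nonNestCrossBijection} by exploiting the fact that both sides decompose canonically into indecomposable pieces, and that the bijection already established respects the relevant notion of ``block'' on each side. First I would observe that every noncrossing, nonnesting partition $\pi$ of $[n]$ admits a unique ordered decomposition into indecomposable blocks of consecutive integers: there is a unique sequence $0=m_0<m_1<\cdots<m_r=n$ such that, for each $\ell$, the arcs of $\pi$ lying between nodes $m_{\ell-1}+1$ and $m_\ell$ form an \emph{indecomposable} noncrossing, nonnesting partition of the interval $\{m_{\ell-1}+1,\dots,m_\ell\}$, and no arc of $\pi$ joins two distinct intervals. (This is precisely the statement that an arc diagram splits at every node not covered by any arc and not separating nested structure; for the noncrossing, nonnesting case it is elementary since every arc connects the minimum and maximum of its interval.) Re-indexing each interval to start at $1$ gives a well-defined map $\mathcal{P}_{\mathrm{ncn}}(n)\to\bigsqcup_r\bigsqcup_{n_1+\cdots+n_r=n}\prod_\ell \mathcal{P}_{\mathrm{ncn}}^{i}(n_\ell)$, and this map is a bijection because the decomposition is unique and any tuple of indecomposable pieces reassembles uniquely by concatenation of arc diagrams.

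Next I would recall the parallel decomposition on the permutation side: every permutation $\sigma\in\Av_n(321,3412)$ has a unique expression as a direct sum $\sigma=\sigma^{(1)}\oplus\sigma^{(2)}\oplus\cdots\oplus\sigma^{(r)}$ of \emph{indecomposable} permutations. The only point to check is that the summands are themselves $(321,3412)$-avoiding, which is immediate since a pattern occurrence inside a single summand, or spanning several, would pull back to an occurrence in $\sigma$ (in fact any pattern occurring across two summands of a direct sum is an order-isomorphic copy of a pattern with its entries split into an initial increasing ``lower-left'' block and a final increasing ``upper-right'' block, and $321$ and $3412$ have no such split that is nontrivially spread, so for our purposes it suffices that each summand avoids the patterns, which it does because it is a subpermutation). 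Conversely any tuple of indecomposable $(321,3412)$-avoiders assembles by $\oplus$ into a $(321,3412)$-avoiding permutation, and indecomposability of each piece forces the block boundaries, so this is again a bijection $\Av_n(321,3412)\to\bigsqcup_r\bigsqcup_{n_1+\cdots+n_r=n}\prod_\ell \{\text{indecomposable }\sigma\in\Av_{n_\ell}(321,3412)\}$.

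Finally I would glue: applying the bijection of Proposition~\ref{prop:nonNestCrossBijection} componentwise to each of the two parallel decompositions, and noting that the trivial one-element partition $\{\{1\}\}$ (the unique indecomposable partition of $[1]$) is sent to the one-element permutation $1$ (the unique indecomposable permutation of $[1]$), we obtain a bijection between $\mathcal{P}_{\mathrm{ncn}}(n)$ and $\Av_n(321,3412)$ for every $n$. Concretely: given $\pi$, split it into indecomposable intervals, apply $\pi\mapsto\sigma_\pi$ on each (after re-indexing), and take the direct sum of the resulting permutations in the same left-to-right order; given $\sigma$, take its direct-sum decomposition and run the inverse map on each indecomposable summand, then concatenate the arc diagrams. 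The two constructions are mutually inverse because the component bijection is, and because the ``split'' and ``assemble'' operations are mutually inverse on each side. I expect the only genuine subtlety — the ``hard part'', though it is still routine — to be verifying carefully that the indecomposable-block decomposition of an arc diagram is \emph{exactly} matched by the direct-sum decomposition of the image permutation, i.e.\ that $\sigma_\pi$ is indecomposable iff $\pi$ is (the forward direction is already in the proof of Proposition~\ref{prop:nonNestCrossBijection}, and the converse follows because a direct-sum splitting of $\sigma_\pi$ at position $p$ forces every left-to-right maximum and every fixed descent to respect $p$, which translates back into an arc-diagram splitting of $\pi$ at node $p$).
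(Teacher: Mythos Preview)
Your proposal is correct and follows essentially the same approach as the paper: decompose $\pi$ into its indecomposable components $\pi_1\mid\cdots\mid\pi_k$, apply the bijection of Proposition~\ref{prop:nonNestCrossBijection} to each, and form $\sigma_\pi=\sigma_{\pi_1}\oplus\cdots\oplus\sigma_{\pi_k}$. The paper's argument is a one-line version of yours, omitting the routine checks you spell out about uniqueness of the decompositions and compatibility of indecomposability on the two sides.
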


This follows by applying the map from Proposition~\ref{prop:nonNestCrossBijection} to each indecomposable component. If $\pi$ has a decomposition $\pi_1 | \pi_2 | \cdots | \pi_k$, then we let $\sigma_\pi =  \sigma_{\pi_1} \oplus \sigma_{\pi_2} \oplus \cdots \oplus \sigma_{\pi_k}$. 

For example, for $\pi = \{\{1,2,4\},\{3\},\{5\},\{6,8\},\{7\}\}$, we have $\pi=\pi_1|\pi_2|\pi_3$ with 
\[ \pi_1 = \{\{1,2,4\},\{3\}\}, \; \pi_2=\{1\}, \text{ and } \pi_3 = \{\{1,3\},\{2\}\}. \]
Then $\sigma_{\pi_1} = \perm{2,4,1,3}$, $\sigma_{\pi_2} =1$, $\sigma_{\pi_3} = \perm{3,1,2}$, and $\sigma_\pi = \perm{2,4,1,3}\oplus1\oplus\perm{3,1,2} = \perm{2,4,1,3,5,8,6,7}$.
\[ \drawarcPerm{0.8}{8}{1/2,2/4,6/8}{1/3,3/4,6/7,7/8} \;\;\leadsto\;\; \perm{2,4,1,3,5,8,6,7}. \quad \]

\begin{remark}
If $\pi$ is a partition of $[n]$ with blocks $B_1,\dots,B_j$, we let 
\[ m(\pi)=\min\{\max(B_1),\dots,\max(B_j)\}. \]
The $k$th column of Table~\ref{tab:triangle321_3412} gives the number of partitions $\pi\in \mathcal{P}_{\rm ncn}(n)$ such that $m(\pi)=k$.  
\end{remark}

\section{Permutations avoiding 231 and 3124}
\label{sec:231_3124}

As in previous sections, we start with a simple proof of the known fact that the elements of $\Av_n(231,3124)$ are counted by the odd-indexed Fibonacci numbers.

\begin{proposition}
\label{prop:231_3124fibo}
$\abs{\Av_n(231,3124)} = F_{2n-1}$.
\end{proposition}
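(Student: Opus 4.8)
The plan is to mirror the structure of the three previous enumeration proofs in this paper: establish the base cases $g_1=1$, $g_2=2$, then show $g_n=3g_{n-1}-g_{n-2}$ for $n\ge 3$ by a decomposition of $\Av_n(231,3124)$ according to the position of a distinguished entry, and finally invoke the recurrence \eqref{eq:fiboRecurrence} to conclude $g_n=F_{2n-1}$. Since the forbidden patterns start with a large value ($231$ and $3124$ both have their maximum, or near-maximum, near the front), the natural entry to track is the position of $n$, or perhaps the position of $1$; by analogy with the $321$-avoiding sections I expect $n$ to be forced into one of the first few positions. Concretely, I would first argue that in any $\sigma\in\Av_n(231,3124)$ the entry $n$ must sit in position $1$, $2$, or possibly $3$ (a $231$-avoider cannot have two entries to the right of $n$ that are out of increasing order relative to something on the left, and $3124$ further restricts having three entries after $n$), and then count each case.

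The key steps, in order: (1) verify $g_1=1$, $g_2=2$ directly. (2) For $n\ge3$, prove the positional restriction on $n$ — this is where I would look carefully at how a $231$ pattern is created when $n$ is too deep in the permutation: if $n$ is at position $j$ with $j$ large, then the entries before $n$ together with suitable entries after $n$ produce a $231$; the $3124$ pattern handles the borderline cases that $231$-avoidance alone leaves open. (3) Count permutations with $n$ in the last feasible position: these should be recoverable uniquely from an element of $\Av_{n-1}(231,3124)$ by inserting $n$, giving one or two copies of $g_{n-1}$. (4) Count the remaining case (e.g.\ $n$ in the first position, or wherever the ``extra'' flexibility lies): show such a permutation forces a specific second entry and is obtained by a unique insertion into a restricted subset of $\Av_{n-1}(231,3124)$, yielding a term $g_{n-1}-g_{n-2}$ after subtracting off the permutations that fail the restriction. (5) Add up: $g_n = 2g_{n-1} + (g_{n-1}-g_{n-2}) = 3g_{n-1}-g_{n-2}$, then match with \eqref{eq:fiboRecurrence}.

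Alternatively — and this may be cleaner — I could decompose by the position of $1$ instead, since $\Av_n^{k\mapsto1}(231,3124)$ is exactly the object tabulated in Table~\ref{tab:triangle321_4123} (the caption already flags that the $(231,3124)$ triangle coincides with the $(321,4123)$ one). If the next proposition in the paper is going to establish $\abs{\Av_n^{k\mapsto1}(231,3124)} = k\cdot\abs{\Av_{n-k}(231,3124)}$ in parallel with Proposition~\ref{prop:321_4123Positional}, then Proposition~\ref{prop:231_3124fibo} could even be deduced as a corollary of that finer count; but within a self-contained proof of just this statement, the three-case recurrence argument above is the most direct route and keeps the section uniform with the earlier ones.

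The main obstacle I anticipate is pinning down exactly which positions $n$ (or $1$) is allowed to occupy and, more delicately, the correct bookkeeping in the third case: showing that the map ``delete the forced entry'' lands bijectively in the right restricted subclass of $\Av_{n-1}(231,3124)$, and that the complementary subclass (the permutations for which the insertion would create a forbidden pattern) is itself in bijection with $\Av_{n-2}(231,3124)$ so that it contributes exactly $g_{n-2}$. Verifying that no $3124$ pattern sneaks in under the insertion — not just that no $231$ does — is the kind of case check that is easy to get slightly wrong, so I would be careful to treat both forbidden patterns at every insertion step.
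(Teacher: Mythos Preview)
Your primary plan rests on a false structural claim: the entry $n$ is \emph{not} forced into one of the first few positions of a $(231,3124)$-avoider. The identity $12\cdots n$ already has $n$ at position $n$, and more generally $n$ can sit anywhere (e.g.\ $12\cdots(k-1)\,n\,k\,(k+1)\cdots(n-1)$ avoids both patterns for every $k$). Likewise the position of $1$ is unconstrained --- the decreasing permutation $n(n-1)\cdots 1$ has $1$ at position $n$ --- so your alternative does not yield a three-case split either. The analogy with the $321$-avoiding sections misleads you here: there, avoidance of $321$ pins the extremal entry to a bounded window of positions, but $231$-avoidance imposes no such positional bound on $n$.

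The paper decomposes by the \emph{value of} $\sigma(1)$, not by the position of an extremal value. Either $\sigma(1)=1$ (so $\sigma=1\oplus\sigma'$) or $\sigma(1)=n$ (so $\sigma=1\ominus\sigma'$), each contributing $g_{n-1}$; or $1<\sigma(1)<n$. In that third case the key lemma --- the ``forced second entry'' you were looking for --- is $\sigma(2)=\sigma(1)-1$: an ascent $\sigma(1)<\sigma(2)$ gives a $231$ with the later entry $1$, while a drop $\sigma(1)-\sigma(2)>1$ traps some value $a$ with $\sigma(2)<a<\sigma(1)$ to the right of position $2$, and then $(\sigma(1),n,a)$ is a $231$ or $(\sigma(1),\sigma(2),a,n)$ is a $3124$ depending on whether $n$ precedes or follows $a$. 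Deleting $\sigma(1)$ (equivalently, $21$-deflating the first two entries) then bijects this case with those $\sigma'\in\Av_{n-1}(231,3124)$ having $\sigma'(1)<n-1$, of which there are $g_{n-1}-g_{n-2}$. That is the missing idea: look at what sits in position $1$, not at where $n$ sits.
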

\begin{proof}
Let $g_n = \abs{\Av_n(231,3124)}$. For $n\ge 3$ and $\sigma\in \Av_n(231,3124)$, we consider the disjoint cases $\sigma(1)=1$, $\sigma(1)=n$, or $1<\sigma(1)<n$. The first two types are of the form $1\oplus \sigma'$ and $1\ominus \sigma'$, respectively, for some $\sigma'\in \Av_{n-1}(231,3124)$. Thus there are $g_{n-1}$ permutations of each type for a combined total of $2g_{n-1}$.

Let $A_n = \{\tau\in \Av_n(231,3124): 1<\tau(1)<n \}$ and let $\sigma\in A_n$. Then, we must have
\[ \sigma(2) = \sigma(1)-1. \]
Indeed, $\sigma(1)<\sigma(2)$ would produce the $231$ pattern $(\sigma(1),\sigma(2),1)$, and if $\sigma(1)-\sigma(2)>1$, then there would exist $a\in[n]$ with $\sigma(1)>a>\sigma(2)$. In that case, either $(\sigma(1),n,a)$ would form a $231$ pattern or $(\sigma(1),\sigma(2),a,n)$ would form a $3124$ pattern. Since both patterns are avoided by $\sigma$, we conclude that $\sigma(2) = \sigma(1)-1$ holds.

This means that every permutation $\sigma\in A_n$ can be uniquely obtained from a permutation $\sigma'\in\Av_{n-1}(231,3124)$ with $\sigma'(1)<n-1$ (there are $g_{n-1} - g_{n-2}$ of those) by $21$-inflating its first entry. More precisely, for every $\sigma'$ as above, we let $\sigma\in\Av_{n}(231,3124)$ be defined by letting $\sigma(1) = \sigma'(1)+1$, and for $j\in\{2,\dots,n\}$, we let
\[ 
\sigma(j) = \begin{cases}
	\sigma'(j-1) &\text{if } \sigma'(j-1) \le \sigma'(1), \\
	\sigma'(j-1)+1 &\text{if } \sigma'(j-1) > \sigma'(1).
	\end{cases}
\]
For example, the three permutations in $\Av_{4}(231,3124)$ not starting with $1$ or $4$ correspond to the three elements of $\Av_{3}(231,3124)$ not starting with $3$:
\[ \perm{1,2,3} \leadsto \perm{2,1,3,4}, \quad \perm{1,3,2} \leadsto \perm{2,1,4,3}, \quad \perm{2,1,3} \leadsto \perm{3,2,1,4}. \]
In conclusion, $g_n = 3g_{n-1} - g_{n-2}$ for $n\ge 3$. Since $g_1=1$ and $g_2=2$, we have $g_n=F_{2n-1}$.
\end{proof}

\medskip
We now consider the subset $\Av_n^{k\mapsto1}(231,3124)\subset \Av_n(231,3124)$ of permutations having the $1$ in position $k$. Observe that the set $\Av_n^{n\mapsto1}(231,3124)$ only contains the decreasing permutation $\sigma = n\, (n-1) \cdots 1$.

\begin{proposition}
\label{prop:231_3124Pos1}
For $1\le k < n$, we have $\abs{\Av_n^{k\mapsto1}(231,3124)} = k\cdot \abs{\Av_{n-k}(231,3124)}$. As in Proposition~\ref{prop:321_4123Positional}, the function $g(x,t) = \sum\limits_{n\ge 1}\sum\limits_{k=1}^n \abs{\Av_n^{k\mapsto1}(231,3124)}\, t^k x^n$ satisfies
\[ g(x,t) =  \frac{tx}{1-tx} + \frac{tx}{(1-tx)^2}\cdot \frac{x-x^2}{1-3x+x^2}. \]
\end{proposition}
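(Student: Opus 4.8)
The plan is to mirror the proof of Proposition~\ref{prop:321_4123Positional}. The case $k=1$ is immediate: as noted in the proof of Proposition~\ref{prop:231_3124fibo}, a permutation $\sigma\in\Av_n(231,3124)$ with $\sigma(1)=1$ is exactly one of the form $1\oplus\sigma'$ with $\sigma'\in\Av_{n-1}(231,3124)$, so $\abs{\Av_n^{1\mapsto1}(231,3124)}=\abs{\Av_{n-1}(231,3124)}=1\cdot\abs{\Av_{n-1}(231,3124)}$. So fix $1<k<n$ and $\sigma\in\Av_n^{k\mapsto1}(231,3124)$. First I would observe that the $k-1$ entries to the left of the $1$ are decreasing: if $\sigma(i)<\sigma(j)$ for some $i<j<k$ then $(\sigma(i),\sigma(j),1)$ is a forbidden $231$ pattern.

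Write $S=\{\sigma(1),\dots,\sigma(k-1)\}$ and $R=\{\sigma(k+1),\dots,\sigma(n)\}$ for the sets of entries to the left and right of the $1$, so $[n]=\{1\}\sqcup S\sqcup R$. The key structural claim is that $S$ is disjoint from the integer interval $[\min R,\max R]$. Indeed, if some $a\in S$ satisfied $\min R<a<\max R$, take $b=\min R$ and $c=\max R$, both to the right of the $1$ with $b<a<c$: if $b$ precedes $c$ then $(a,1,b,c)$ is a $3124$ pattern, and if $c$ precedes $b$ then $(a,c,b)$ is a $231$ pattern, a contradiction either way. Since $[\min R,\max R]$ contains neither $1$ nor any element of $S$, it is contained in $R$; hence $R$ is the interval $\{r,r+1,\dots,r+n-k-1\}$ with $r=\min R$, and $S=\{2,\dots,r-1\}\cup\{r+n-k,\dots,n\}$. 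The constraints $r\ge2$ and $r+n-k-1\le n$ force $r\in\{2,\dots,k+1\}$, so $\sigma$ has one of exactly $k$ possible ``shapes'' on its first $k$ positions (the decreasing block $S$ followed by the $1$).

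Conversely, I would check that, for each of these $k$ shapes and each $\rho\in\Av_{n-k}(231,3124)$ inserted (suitably shifted) in positions $k+1,\dots,n$, the resulting $\sigma$ avoids $231$ and $3124$. The decreasing left block contains no occurrence of either pattern, so a forbidden occurrence must use an entry outside it; a short case check — using that the $1$ sits at position $k$, that the left block is decreasing, and that $S\cap[\min R,\max R]=\emptyset$ — shows that any occurrence not contained entirely in the right block would force a left entry $a\in S$ with $\min R<a<\max R$ (or two left entries in the wrong relative order), contradicting the above. An occurrence contained entirely in the right block is impossible since $\rho$ avoids the patterns. This yields a bijection between $\Av_n^{k\mapsto1}(231,3124)$ and pairs consisting of one of the $k$ shapes together with a permutation in $\Av_{n-k}(231,3124)$, so $\abs{\Av_n^{k\mapsto1}(231,3124)}=k\cdot\abs{\Av_{n-k}(231,3124)}$. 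I expect this last case check — ruling out every forbidden pattern straddling the left block, the entry $1$, and the right block — to be the main obstacle, although it runs parallel to the corresponding verification for $\Av_n(321,4123)$ and is no harder.

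For the generating function I would isolate the diagonal term $k=n$: $\Av_n^{n\mapsto1}(231,3124)$ consists only of the decreasing permutation, contributing $\sum_{n\ge1}t^nx^n=\tfrac{tx}{1-tx}$. For $k<n$, substitute $\abs{\Av_n^{k\mapsto1}(231,3124)}=k\,g_{n-k}$ with $g_m=\abs{\Av_m(231,3124)}=F_{2m-1}$; reindexing by $m=n-k$ and using $\sum_{k\ge1}k(tx)^k=\tfrac{tx}{(1-tx)^2}$ together with $\sum_{m\ge1}g_mx^m=\tfrac{x-x^2}{1-3x+x^2}$ gives exactly the expression in Proposition~\ref{prop:321_4123Positional}, which is why the statement simply refers to that result.
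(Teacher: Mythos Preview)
Your proposal is correct and follows essentially the same strategy as the paper: both arguments establish that the entries to the right of the $1$ form an interval of consecutive integers (equivalently, that the left block consists of a top segment and a bottom segment of consecutive values), giving exactly $k$ shapes, and then derive the generating function identically. The only organisational difference is that the paper splits into the cases $\sigma(1)<n$ (yielding $\sigma=\delta_k\oplus\sigma''$) and $\sigma(1)=n$ (yielding $\sigma=\delta_j\ominus(\delta_{k-j}\oplus\sigma'')$ via a ``one gap'' argument), whereas your single observation $S\cap[\min R,\max R]=\emptyset$ handles both cases at once; your sketched converse check goes through exactly as you describe and is likewise left implicit in the paper.
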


\begin{proof}
Every $\sigma\in\Av_n(231,3124)$ that starts with $1$ can be written as $\sigma=1\oplus\sigma'$, where $\sigma'$ is an element of $\Av_{n-1}(231,3124)$. Thus $\abs{\Av_n^{1\mapsto1}(231,3124)} = \abs{\Av_{n-1}(231,3124)}$, as claimed.

Suppose that $\sigma$ is such that $\sigma(k)=1$ for $1<k<n$. With the same argument used in the proof of Proposition~\ref{prop:231_3124fibo}, we deduce that, if $1<\sigma(1)<n$, then $\sigma$ must start with the elements of $[k]$ in decreasing order. This means, $\sigma = \delta_k \oplus \sigma''$, where $\delta_k$ is the decreasing permutation on $[k]$ and $\sigma''\in\Av_{n-k}(231,3124)$.

On the other hand, if $\sigma(k)=1$ for $1<k<n$ and $\sigma(1)=n$, then the first $k$ entries of $\sigma$ must consist of two decreasing sequences of consecutive numbers. Indeed, if there were two gaps in the values of $\sigma$ to the left of $1$, there would exist entries $a,b$ appearing to the right of $1$ such that $n>a>\sigma(i)>b> \sigma(i+1)$ for some $i\in\{2,\dots,k-1\}$. Now, if $a$ is to the left of $b$, then $(\sigma(i),a,b)$ would form a $231$ pattern, and if $a$ is to the right of $b$, then $(\sigma(i),1,b,a)$ would form a $3124$ pattern. In other words, in this case, $\sigma$ has exactly one gap in its first descending run. Hence $\sigma = \delta_{j}\ominus (\delta_{k-j}\oplus\sigma'')$ for some $j\in\{1,\dots,k-1\}$, where $\delta_{j}$ and $\delta_{k-j}$ are the decreasing permutations on $[j]$ and $[k-j]$, and $\sigma''\in\Av_{n-k}(231,3124)$:
\begin{center}
\begin{tikzpicture}
\fill (0.05,2.95) circle (0.1);
\fill (2,0) circle (0.1);
\node[above=1pt] at (0,3) {\small $n$};
\node[below=1pt] at (2,0) {\small $1$};
\clip (0.05,0.05) rectangle (2.95,2.95);
\draw[gray] (0,0) grid (3,3);
\draw[mesh] (0,0) rectangle (1,2);
\draw[mesh] (1,1) rectangle (2,3);
\draw[mesh] (2,0) rectangle (3,1);
\draw[mesh] (2,2) rectangle (3,3);
\draw[ultra thick] (0,3) -- (1,2);
\draw[ultra thick] (1,1) -- (2,0);
\fill (2,0) circle (0.1);
\node[below=2pt] at (0.35,2.7) {\scriptsize $\delta_j$};
\node[below=1pt] at (1.33,0.6) {\scriptsize $\delta_{k-j}$};
\node at (2.5,1.5) {$\sigma''$};
\end{tikzpicture}
\end{center}
In conclusion, for $1<k<n$, every $\sigma''\in \Av_{n-k}(231,3124)$ leads to $k$ different permutations in $\Av_n^{k\mapsto1}(231,3124)$. This finishes the proof.
\end{proof}

\begin{remark}
Let $\delta_0$ be the empty permutation. In the previous proof, we have shown that every $\sigma\in\Av_n(231,3124)$ with $\sigma(k_1)=1$ is of the form $\sigma = \delta_{j_1}\ominus (\delta_{k_1-j_1}\oplus\sigma_{k_1})$ for some $j_1\in\{0,1,\dots,k_1-1\}$ and $\sigma_{k_1}\in \Av_{n-k_1}(231,3124)$. The argument can certainly be iterated. If $\sigma_{k_1}$ has the $1$ at position $k_2$, then $\sigma_{k_1} = \delta_{j_2}\ominus (\delta_{k_2-j_2}\oplus\sigma_{k_2})$ for some $j_2\in\{0,1,\dots,k_2-1\}$ and $\sigma_{k_2}\in \Av_{n-k_1-k_2}(231,3124)$, and therefore
\[ \sigma = \delta_{j_1}\ominus (\delta_{k_1-j_1}\oplus (\delta_{j_2}\ominus (\delta_{k_2-j_2}\oplus\sigma_{k_2}))). \]
If $\sigma$ has $d+1$ descending runs, then the above process can be iterated $d$ times until $\sigma_{k_{d}}$ is the decreasing permutation on $[n-k_1-\dots-k_d]$. In particular, this implies that all the descending runs of $\sigma$ consist of one or two sequences of consecutive numbers.
\end{remark}

\subsection*{Directed column-convex polyominoes}
We finish this section with a direct bijection between $\Av_n(231,3124)$ and the set of directed column-convex (DCC) polyominoes of area $n$. A {\em directed} polyomino is one that can be built by starting with a single cell and adding new cells on the right or on the top of an existing cell. A polyomino is called {\em column-convex} if every column consists of contiguous cells. The {\em area} of a polyomino is the number of its cells. An example of a DCC polyomino is shown in Figure~\ref{fig:9dccPoly}.
\begin{figure}[ht]
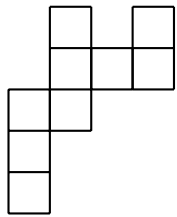

\drawPoly{0.55}{0/3,2/3,3/1,3/2}
\caption{Directed column-convex polyomino of area $9$.}
\label{fig:9dccPoly}
\end{figure}
It is known (see e.g.\ Delest \& Delucq \cite{DD92}, Barcucci et.~al.\ \cite{BPS93}) that the number of DCC polyominoes of area $n$ is the Fibonacci number $F_{2n-1}$.

\begin{proposition}
There is a bijection between the set of directed column-convex polyominoes of area $n$ and the set of $(231,3124)$-avoiding permutations of size $n$.
\end{proposition}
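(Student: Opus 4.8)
The plan is to represent a directed column-convex polyomino of area $n$ and a $(231,3124)$-avoiding permutation of size $n$ by the \emph{same} combinatorial data: a composition $(\ell_1,\dots,\ell_c)$ of $n$ together with a bounded sequence of offsets $(e_1,\dots,e_{c-1})$ subject to $0\le e_i\le\ell_i-1$. Identifying each family with this common data set, and checking that the two identifications are compatible in the obvious way, immediately produces the bijection.

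For the polyomino side, I would read a DCC polyomino column by column. Column-convexity makes each column a contiguous vertical bar, so column $i$ is recorded by its height $h_i\ge1$ and the row $b_i$ of its bottom cell, with $h_1+\dots+h_c$ equal to the area $n$. Directedness --- every cell reachable from the bottom-left cell by unit North and East steps --- forces the bottom cell of column $i$ to lie immediately to the right of a cell of column $i-1$, i.e.\ $b_{i-1}\le b_i\le b_{i-1}+h_{i-1}-1$; conversely, any heights $h_1,\dots,h_c\ge1$ with $h_1+\dots+h_c=n$ and any bottom rows satisfying those inequalities (normalizing $b_1=0$) assemble into a unique DCC polyomino. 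Putting $e_i:=b_{i+1}-b_i$, this identifies DCC polyominoes of area $n$ with tuples $\big((h_1,\dots,h_c);(e_1,\dots,e_{c-1})\big)$ satisfying $0\le e_i\le h_i-1$, and in particular shows there are exactly $h_1\cdots h_{c-1}$ of them with a prescribed height composition.

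For the permutation side, I would invoke the iterated decomposition recorded in the Remark following Proposition~\ref{prop:231_3124Pos1}. Let $\sigma\in\Av_n(231,3124)$ have successive descending runs of lengths $\ell_1,\dots,\ell_c$. As seen in the proof of Proposition~\ref{prop:231_3124Pos1}, the entry $1$ of $\sigma$ sits at the end of its first descending run, so we may write $\sigma=\delta_{j_1}\ominus(\delta_{\ell_1-j_1}\oplus\sigma')$ for a unique $j_1\in\{0,\dots,\ell_1-1\}$ and a unique $\sigma'\in\Av_{n-\ell_1}(231,3124)$, and $\sigma'$ then has descending runs of lengths $\ell_2,\dots,\ell_c$. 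Iterating on $\sigma'$ peels off parameters $j_2,\dots,j_{c-1}$ with $0\le j_i\le\ell_i-1$, the process stopping once a single decreasing block remains (which carries no parameter, since $\delta_j\ominus\delta_{m-j}=\delta_m$). The resulting map $\sigma\mapsto\big((\ell_1,\dots,\ell_c);(j_1,\dots,j_{c-1})\big)$ is visibly injective, and iterating the count $\abs{\Av_n^{k\mapsto1}(231,3124)}=k\cdot\abs{\Av_{n-k}(231,3124)}$ shows that there are exactly $\ell_1\cdots\ell_{c-1}$ permutations with a prescribed descending-run composition, matching the number of admissible tuples; hence the map is a bijection onto the same data set as in the polyomino case.

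Composing the two identifications --- match $c$ with $c$, each $h_i$ with $\ell_i$, and each $e_i$ with $j_i$ --- gives the desired bijection, since the polyomino constraint $0\le e_i\le h_i-1$ is literally the permutation constraint $0\le j_i\le\ell_i-1$. As a sanity check, the polyomino of Figure~\ref{fig:9dccPoly}, with height composition $(3,3,1,2)$ and offsets $(2,1,0)$, maps to $\perm{9,8,1,7,3,2,4,6,5}$; and tracking the position of $1$ (equivalently the length $\ell_1=h_1$ of the first run, equivalently the height of the first column) reproduces the triangle in Table~\ref{tab:triangle321_4123}. The only real obstacle is the permutation-side step: one must verify that the recursion is well defined (the peeled permutation $\sigma'$ again avoids $231$ and $3124$) and terminates, and --- for the surjectivity needed in the bijection --- that \emph{every} admissible parameter tuple is realized by some $(231,3124)$-avoider. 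This last point is exactly what the cardinality identity in Proposition~\ref{prop:231_3124Pos1} supplies (all three families being counted by $F_{2n-1}$), so a direct, somewhat fiddly pattern-avoidance check of the reconstruction map can be sidestepped.
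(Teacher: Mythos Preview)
Your argument is correct and is essentially the same bijection as the paper's: the paper presents it as an explicit cell-labeling algorithm, but its inverse map is described precisely via the $w_j=u_jv_j$ decomposition of descending runs, which is exactly your offset parameter $j_i=|u_i|$ matched against the column offset $e_i=b_{i+1}-b_i$. The only cosmetic difference is that you factor both sides through the common parameter space $\big((\ell_1,\dots,\ell_c);(j_1,\dots,j_{c-1})\big)$ and invoke the count in Proposition~\ref{prop:231_3124Pos1} for surjectivity, whereas the paper writes out the forward labeling directly; your sanity check on Figure~\ref{fig:9dccPoly} confirms the two constructions coincide.
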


\begin{proof}
Let $P$ be a DCC polyomino with $k$ columns and $n$ cells. We will construct a corresponding permutation $\sigma_P$ in $\Av_n(231,3124)$, having $k$ descending runs. Every step of our algorithm will be illustrated with an example.
\begin{enumerate}[$(i)$]
\item First, going through the columns of $P$ from left to right, mark the cell that aligns with the bottom cell of the adjacent column to its right. For example, if $P$ is the polyomino from Figure~\ref{fig:9dccPoly}, we get:
\begin{center}
\begin{tikzpicture}
\node at (0.63,1) {\drawPoly{0.5}{0/3,2/3,3/1,3/2}};
\foreach [count=\x] \b in {1,2,2} {
	\draw[line width=1.25,dashed,red] (0.5*\x-0.75,0.5*\b+0.25) -- (0.5*\x-0.25,0.5*\b+0.25);
	}
\end{tikzpicture}
\end{center}
\item Going through the columns from left to right, use the elements of $[n]$ in consecutive order (starting with 1) to label the cells of each column from the top down to the cell that was marked in the previous step. 
\begin{center}
\begin{tikzpicture}
\node at (0.63,1) {\drawPoly{0.5}{0/3,2/3,3/1,3/2}};
\foreach [count=\x] \b in {1,2,2} {
	\draw[line width=1.25,dashed,red] (0.5*\x-0.75,0.5*\b+0.25) -- (0.5*\x-0.25,0.5*\b+0.25);
	}
\node at (0,1) {\small $1$};
\node at (0.5,2) {\small $2$};
\node at (0.5,1.5) {\small $3$};
\node at (1,1.5) {\small $4$};
\end{tikzpicture}
\end{center}
\item Finally, going through the columns of $P$ from right to left, continue labeling the remaining cells in consecutive order from top to bottom. When all cells are filled with the elements of $[n]$, we build the permutation $\sigma_P$ from left to right by reading the numbers in each column from bottom to top as its descending runs.
\begin{center}
\begin{tikzpicture}
\node at (0.63,1) {\drawPoly{0.5}{0/3,2/3,3/1,3/2}};
\node[gray!70] at (0,1) {\small $1$};
\node[gray!70] at (0.5,2) {\small $2$};
\node[gray!70] at (0.5,1.5) {\small $3$};
\node[gray!70] at (1,1.5) {\small $4$};
\node at (1.5,2) {\small $5$};
\node at (1.5,1.5) {\small $6$};
\node at (0.5,1) {\small $7$};
\node at (0,0.5) {\small $8$};
\node at (0,0) {\small $9$};
\node at (4,1) {$\leadsto\quad \perm{9,8,1,7,3,2,4,6,5}$};
\end{tikzpicture}
\end{center}
\end{enumerate}
More examples are shown in Figure~\ref{fig:4dccpPerm}. By construction, the resulting permutation has the nesting properties discussed in the remark after Proposition~\ref{prop:231_3124Pos1}. So, $\sigma_P\in \Av_n(231,3124)$. 

The above algorithm is reversible. Let $\sigma\in \Av_n(231,3124)$ and suppose $\sigma = w_1w_2\cdots w_k$, where $w_1,\dots,w_k$ are its descending runs. As discussed before, every $w_j$ is of the form $w_j = u_j\,v_j$, where $u_j$ and $v_j$ are words consisting of decreasing consecutive numbers (allowing $u_j$ to be the empty word). For every $j\in\{1,\dots,k\}$, we let $C_j$ be the polyomino consisting of $|w_j| = |u_j|+|v_j|$ cells in a single column. We now construct a corresponding polyomino $P_\sigma$ with $n$ cells by connecting $C_1,\dots,C_k$ as follows. Once column $C_j$ is placed, column $C_{j+1}$ will be attached to its right in such a way that the base cell of $C_{j+1}$ is adjacent to the $|v_j|$-th cell from the top of $C_j$. Clearly, $P_\sigma$ is a DCC polyomino of area $n$. 
\end{proof}

\begin{figure}[ht]
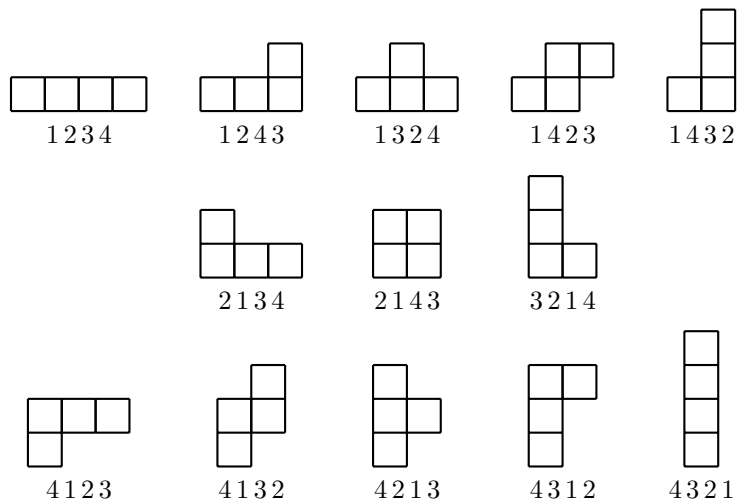

\def\eps{0.45}
\small
\begin{tabular}{ccccc}
\drawPoly{\eps}{0/1,0/1,0/1,0/1} &
\drawPoly{\eps}{0/1,0/1,0/2} &
\drawPoly{\eps}{0/1,0/2,0/1} &
\drawPoly{\eps}{0/1,0/2,1/1} & 
\drawPoly{\eps}{0/1,0/3}
\\
\perm{1,2,3,4} &
\perm{1,2,4,3} &
\perm{1,3,2,4} &
\perm{1,4,2,3} &
\perm{1,4,3,2}
\\[2ex]
& \drawPoly{\eps}{0/2,0/1,0/1} &
\drawPoly{\eps}{0/2,0/2} &
\drawPoly{\eps}{0/3,0/1} &
\\
& \perm{2,1,3,4} &
\perm{2,1,4,3} &
\perm{3,2,1,4} & 
\\[1ex]
\drawPoly{\eps}{0/2,1/1,1/1} &
\drawPoly{\eps}{0/2,1/2} &
\drawPoly{\eps}{0/3,1/1} &
\drawPoly{\eps}{0/3,2/1} &
\drawPoly{\eps}{0/4}
\\
\perm{4,1,2,3} &
\perm{4,1,3,2} &
\perm{4,2,1,3} &
\perm{4,3,1,2} &
\perm{4,3,2,1}
\end{tabular}
\caption{DCC polyominoes of area 4 and their corresponding $(231,3124)$-avoiding permutations of size 4.}
\label{fig:4dccpPerm}
\end{figure}

\begin{remark}
In the above bijection, the elements of $\Av_n^{k\mapsto1}(231,3124)$ correspond to DCC polyominoes of area $n$ whose first column has exactly $k$ cells.
\end{remark}

\begin{remark}
Another class of combinatorial objects counted by the odd-indexed Fibonacci numbers are the nondecreasing Dyck paths. These are Dyck paths for which the sequence of the altitudes of the valleys is nondecreasing. A bijection between DCC polyominoes and nondecreasing Dyck paths can be found in Deutsch \& Prodinger \cite[Section 3]{DP03}. 

Moreover, as discussed in Vella~\cite{Vella03}, the so-called standard bijection\footnote{Also known as first-return bijection.} between $132$-avoiding permutations and Dyck paths gives a bijection between $(132,3241)$-avoiding permutations and nondecreasing Dyck paths.
\end{remark}


\end{document}